     \newcommand\Cay{\mathrm{Cay}}
\newtheorem{theorem}{Theorem}[section]
\newtheorem{lemma}[theorem]{Lemma}
\newtheorem{corollary}[theorem]{Corollary}
\theoremstyle{definition}
\def\Ga{\Gamma}
\def\ZZZ{\mathbb{Z}}
\newcommand{\pmat}[1]{\begin{pmatrix}#1\end{pmatrix}}
\newcommand{\cha}[2]{\textcolor{blue}{\sout{#1}}\textcolor{red}{#2}}
\begin{document}

\title[subgroup regular sets in Cayley graphs]{Subgroup regular sets in Cayley graphs}

\author[Wang]{Yanpeng Wang}
\thanks{Corresponding author: Yanpeng Wang}
\address{Rongcheng Campus\\Harbin University of Science and Technology\\Harbin, Heilongjiang 150080\\People’s Republic of China}
\email{wangyanpeng@pku.edu.cn}

\author[Xia]{Binzhou Xia}
\address{School of Mathematics and Statistics\\The University of Melbourne\\Parkville, VIC 3010\\Australia}
\email{binzhoux@unimelb.edu.au}

\author[Zhou]{Sanming Zhou}
\address{School of Mathematics and Statistics\\The University of Melbourne\\Parkville, VIC 3010\\Australia}
\email{sanming@unimelb.edu.au}

\date\today

\maketitle

\begin{abstract}
Let $\Gamma$ be a graph with vertex set $V$, and let $a$ and $b$ be nonnegative integers. A subset $C$ of $V$ is called an $(a,b)$-regular set in $\Gamma$ if every vertex in $C$ has exactly $a$ neighbors in $C$ and every vertex in $V\setminus C$ has exactly $b$ neighbors in $C$. In particular, $(0, 1)$-regular sets and $(1, 1)$-regular sets in $\Ga$ are called perfect codes and total perfect codes in $\Ga$, respectively. A subset $C$ of a group $G$ is said to be an $(a,b)$-regular set of $G$ if there exists a Cayley graph of $G$ which admits $C$ as an $(a,b)$-regular set. In this paper we prove that, for any generalized dihedral group $G$ or any group $G$ of order $4p$ or $pq$ for some primes $p$ and $q$, if a nontrivial subgroup $H$ of $G$ is a $(0, 1)$-regular set of $G$, then it must also be an $(a,b)$-regular set of $G$ for any $0\leqslant a\leqslant|H|-1$ and $0\leqslant b\leqslant |H|$ such that $a$ is even when $|H|$ is odd. A similar result involving $(1, 1)$-regular sets of such groups is also obtained in the paper.

\smallskip
\textit{Key words:} Cayley graph; perfect code; regular set; regular set; perfect colouring; perfect $2$-colouring

\smallskip
\textit{Mathematics Subject Classification 2010:} 05C25, 05E18, 94B25

\end{abstract}

\section{Introduction}

All groups considered in this paper are finite, and all graphs considered are finite, undirected and simple.
Let $\Gamma$ be a graph with vertex set $V$, and let $a, b$ be nonnegative integers. An \emph{$(a,b)$-regular set} \cite{Cardoso2019} in $\Gamma$ (or simply a \emph{regular set} in $\Gamma$ if the parameters $a, b$ are not important in the context) is a nonempty proper subset $C$ of $V$ such that $|\Gamma(v)\cap C|=a$ for each $v\in C$ and $|\Gamma(v)\cap C|=b$ for each $v \in V\setminus C$, where $\Gamma(v)$, the \emph{neighborhood} of $v$ in $\Gamma$, is the set of neighbors of $v$ in $\Ga$. (Two vertices are neighbors of each other if they are adjacent in the graph.) In particular, a $(0,1)$-regular set is called a \emph{perfect code}, and a $(1,1)$-regular set is called a \emph{total perfect code}.
In other words, a perfect code \cite{Big, Kratochvil1986} in $\Gamma$ is an independent set $C$ of $\Gamma$ such that every vertex in $V \setminus C$ has exactly one neighbor in $C$, and a total perfect code \cite{Zhou2016} in $\Gamma$ is a subset $C$ of $V$ such that every vertex of $\Gamma$ has exactly one neighbor in $C$.

We study regular sets in Cayley graphs in this paper. Given a group $G$ and an inverse-closed subset $S$ of $G\setminus\{e\}$, the \emph{Cayley graph} $\Cay(G,S)$ of $G$ with \emph{connection set} $S$ is the graph with vertex set $G$ such that $x,y\in G$ are adjacent if and only if $yx^{-1}\in S$. Herein and in the sequel we use $e$ to denote the identity element of the group under consideration. The main results in this paper affirmatively answer a question in \cite{WXZ2020} in the case when the group involved is a generalized dihedral group or a group of order $4p$ or $pq$, where $p$ and $q$ are primes. We prove that, for such a group $G$ and any nontrivial subgroup $H$ of $G$, if there exists a Cayley graph of $G$ which admits $H$ as a perfect code, then for any $0\leqslant a\leqslant|H|-1$ and $0\leqslant b\leqslant |H|$, with $a$ even when $|H|$ is odd, there exists a Cayley graph of $G$ which admits $H$ as an $(a,b)$-regular set.

Before formally stating our results, let us briefly discuss our motivation and related background. First, the study of perfect codes and total perfect codes is a significant part of the theory of domination in graphs \cite{HHS1998}, because a perfect code is exactly an efficient dominating set \cite{DS2003} (also known as an independent perfect dominating set~\cite{Lee2001}) and a total perfect code is exactly an efficient open dominating set \cite{HHS1998}. Second, as mentioned in \cite{HXZ2018, MWWZ2019}, perfect codes in Cayley graphs are especially interesting objects of study due to their connections with perfect codes in coding theory \cite{Va73}. In fact, the Hamming graph $H(n, q)$ and the Cartesian product $C_q^{\Box n}$ of $n$ copies of cycle $C_q$ with length $q$ are both Cayley graphs of $\ZZZ_q^n$. It is well known that the Hamming and Lee metrics over $\ZZZ_q^n$ are exactly the graph distances in $H(n, q)$ and $C_q^{\Box n}$, respectively. Therefore, perfect codes under these metrics \cite{Heden1, Va75} in classical coding theory are exactly perfect codes in Cayley graphs $H(n, q)$ and $C_q^{\Box n}$, respectively. So perfect codes in Cayley graphs can be considered as a generalization of perfect codes in the classical setting \cite{Va73}. For this reason, perfect codes in Cayley graphs have attracted considerable attention; see \cite[Section 1]{HXZ2018} for a brief account of results and \cite{CWX2020, HXZ2018, MWWZ2019, ZZ2021, ZZ2021a, Z15} for several recent studies in this line of research.

Thirdly, perfect codes in Cayley graphs are closely related to factorizations and tilings of groups. In general, a \emph{factorization} \cite{SS2009} of a group $G$ (into two factors) is a pair of subsets $(A, B)$ of $G$ such that every element of $G$ can be written uniquely as $ab$ with $a \in A$ and $b \in B$. If in addition $e \in A \cap B$, then such a factorization is called a \emph{tiling} \cite{Dinitz2006} of $G$. Beginning with G. Haj\'{o}s \cite{Hajos1942} in his proof of a well-known conjecture of Minkowski, the study of factorizations and tilings of groups is a classical topic which lies in the intersection of group theory and combinatorics. See, for example, \cite{Dinitz2006,RT1966,Szab2006} for several related results and \cite{SS2009} for a monograph on this topic. One can easily verify that $(A, B)$ is a tiling of $G$ such that $A$ is inverse-closed if and only if $B$ is a perfect code in $\Cay(G, A \setminus \{e\})$ with $e \in B$. Thus, results on perfect codes in Cayley graphs can be regarded as results on tilings of the underlying groups, and the converse is also true if the first factor is required to be inverse-closed. For example, the main result in \cite{RT1966} can be restated as follows: If $1 + (n(n-1)/2)$ is divisible by a prime exceeding $2 + \sqrt{n}$, then the complete transposition graph $T_n$ does not admit any perfect code, where $T_n$ is defined (see, for example, \cite{CLWZ2021}) as the Cayley graph of the symmetric group $S_n$ with connection set consisting of all transpositions in $S_n$.

Finally, a regular set in a regular graph is precisely one part of an equitable partition into two parts. More specifically, for a $k$-regular graph $\Ga$ with vertex set $V$, a subset $C$ of $V$ is an $(a, b)$-regular set in $\Ga$ if and only if $\{C, V\setminus C\}$ is an equitable partition of $\Ga$ with quotient matrix $\pmat{a & k-a\\b &k-b}$. In general, given a graph $\Ga$ with vertex set $V$, a partition $\mathcal{V} = \{V_1, V_2, \dots, V_r\}$ of $V$ is called an \emph{equitable partition} \cite[\S 9.3]{GR2001} (or a \emph{perfect colouring} \cite{F2007}) of $\Ga$ if there exists an $r\times r$ matrix $M=(m_{ij})$, called the \emph{quotient matrix} of $\mathcal{V}$, such that for any $i, j$ with $1 \le i, j \le r$, every vertex in $V_i$ has exactly $m_{ij}$ neighbors in $V_j$. It is known that all eigenvalues of $M$ are eigenvalues of $\Gamma$ (see \cite[Theorem 9.3.3]{GR2001}). In particular, if a regular graph admits an $(a, b)$-regular set, then it has $a-b$ as an eigenvalue. As one can find in \cite{G1993,GR2001}, equitable partitions play an important role in the study of many combinatorial structures, including distance-regular graphs and association schemes. Now assume that $\Gamma$ is a connected $k$-regular graph with vertex set $V$. Then $k$ is a simple eigenvalue of $M$~\cite[Theorem 9.3.3]{GR2001}, and the equitable partition $\mathcal{V}$ of $\Gamma$ is said to be \emph{$\mu$-equitable}~\cite{BCGG2019} if all eigenvalues of $M$ other than $k$ are equal to $\mu$. In particular, if an equitable partition $\{C, V\setminus C\}$ is $\mu$-equitable, then the nonempty proper subset $C$ of $V$ is called a \emph{$\mu$-perfect set} \cite{BCGG2019}. It was proved in \cite[Proposition 2.1]{BCGG2019} that, for a partition $\mathcal{V} = \{V_1, V_2, \dots, V_r\}$ of $V$, if $\mathcal{V}$ is $\mu$-equitable, then each $V_i$ is $\mu$-perfect, and conversely if $V_1, V_2, \dots, V_{r-1}$ are all $\mu$-perfect, then $\mathcal{V}$ is $\mu$-equitable. Thus it is particularly important to study equitable partitions with exactly two parts. Such two-part equitable partitions are called \emph{perfect $2$-colourings} \cite{F2007}, and they are essentially regular sets as seen above. Perfect $2$-colourings are closely related to coding theory and as such have been studied extensively over many years. See, for example, \cite{GG2013} for a study of perfect $2$-colorings of Johnson graphs $J(v,3)$, and \cite{BKMV2021, MV2020} for some recent results on perfect $2$-colourings of Hamming graphs. In \cite{BCGG2019}, equitable partitions of Latin square graphs are studied and those whose quotient matrix does not have an eigenvalue $-3$ are classified. In \cite{RCZ2018}, a few results on equitable partitions and regular sets of Cayley graphs involving irreducible characters of the underlying groups are obtained.


A subset $C$ of a group $G$ is called \cite{HXZ2018} a \emph{(total) perfect code of $G$} if it is a (total) perfect code in some Cayley graph of $G$. In general, a subset $C$ of $G$ is called an \emph{$(a,b)$-regular set of $G$} if $C$ is an $(a,b)$-regular set in some Cayley graph of $G$. As noted in \cite{HXZ2018}, subgroup perfect codes (that is, subgroups which are perfect codes of the group under consideration) are particularly interesting since they are an analogue of perfect linear codes \cite{Heden1, Va75} in coding theory. The study of subgroup perfect codes was initiated in \cite{HXZ2018}, and further results on them were obtained in \cite{B21, ZZ2021, ZZ2021a}. Recently, a characterization of those groups whose subgroups are all perfect codes of the group was given in \cite{MWWZ2019}.

In \cite[Theorem~2.2]{HXZ2018}, it was proved that, for a normal subgroup $H$ of a group $G$, $H$ is a perfect code of $G$ if and only if
\begin{equation}\label{equ9}
\text{for any $g\in G$ with $g^2\in H$, there exists $h\in H$ such that $(gh)^2=e$},
\end{equation}
and $H$ is a total perfect code of $G$ if and only if~\eqref{equ9} holds and $|H|$ is even. In \cite{WXZ2020}, the authors of the present paper improved this result as follows.

\begin{theorem}\label{thm6}
\emph{(\cite[Theorem~1.2]{WXZ2020})}
Let $G$ be a group and let $H$ be a nontrivial normal subgroup of $G$. Then the following statements are equivalent:
\begin{enumerate}
\item[{\rm(a1)}] $G$ and $H$ satisfy condition \eqref{equ9};
\item[{\rm(a2)}] $H$ is a perfect code of $G$;
\item[{\rm(a3)}] $H$ is an $(a,b)$-regular set of $G$ for every pair of integers $a$ and $b$ with $0\leqslant a\leqslant|H|-1$ and $0\leqslant b\leqslant |H|$ such that $\gcd(2,|H|-1)$ divides $a$.
\end{enumerate}
And the following statements are also equivalent:
\begin{enumerate}
\item[{\rm(b1)}] $G$ and $H$ satisfy condition \eqref{equ9}, and $|H|$ is even;
\item[{\rm(b2)}] $H$ is a total perfect code of $G$;
\item[{\rm(b3)}] $H$ is an $(a,b)$-regular set of $G$ for every pair of integers $a$ and $b$ with $0\leqslant a\leqslant|H|-1$ and $0\leqslant b\leqslant |H|$.
\end{enumerate}
\end{theorem}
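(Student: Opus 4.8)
The plan is to fix the connection set coset by coset, using the fact recorded in \cite[Theorem~2.2]{HXZ2018} that condition \eqref{equ9} is exactly what makes a normal $H$ a perfect code. First I would translate the regular-set requirement into intersection sizes. For an inverse-closed $S\subseteq G\setminus\{e\}$, a vertex $h\in H$ has exactly $|S\cap H|$ neighbours inside $H$ (since $sh\in H$ iff $s\in H$), while a vertex $g\in G\setminus H$ has exactly $|S\cap Hg^{-1}|$ neighbours inside $H$; as $g$ runs over $G\setminus H$ and $H$ is normal, $Hg^{-1}$ runs over all non-trivial cosets. Hence $H$ is an $(a,b)$-regular set in $\Cay(G,S)$ if and only if $|S\cap H|=a$ and $|S\cap C|=b$ for every non-trivial coset $C$ of $H$. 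The entire problem thus becomes: build an inverse-closed $S$ with these prescribed intersection sizes, treating $H$ and each non-trivial coset independently.

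Next I would lay out the two implication cycles. For the first part I prove $(a1)\Rightarrow(a3)\Rightarrow(a2)\Rightarrow(a1)$: the implication $(a2)\Rightarrow(a1)$ is \cite[Theorem~2.2]{HXZ2018}, and $(a3)\Rightarrow(a2)$ is immediate because $\gcd(2,|H|-1)$ divides $0$, so the choice $(a,b)=(0,1)$ exhibits $H$ as a perfect code. The real content is $(a1)\Rightarrow(a3)$. The second part runs identically through $(b1)\Rightarrow(b3)\Rightarrow(b2)\Rightarrow(b1)$, with $(b3)\Rightarrow(b2)$ given by $(a,b)=(1,1)$ and $(b2)\Rightarrow(b1)$ again from \cite[Theorem~2.2]{HXZ2018}.

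The heart of the argument is an elementary counting lemma that I would isolate and prove first: if a finite set $X$ carries an involutory permutation $\sigma$ with $m$ fixed points, then the $\sigma$-invariant subsets of $X$ realise exactly the sizes $\{0,1,\dots,|X|\}$ when $m\geq 1$, and exactly the even sizes in that range when $m=0$. The proof is that a $\sigma$-invariant set obtained by choosing $i$ fixed points and $j$ pairs has size $i+2j$, and the consecutive blocks $[2j,2j+m]$ overlap precisely when $m\geq 1$. I apply this with $\sigma$ the inversion map $x\mapsto x^{-1}$: on $H\setminus\{e\}$ its fixed points are the involutions of $H$, and on a non-trivial coset $C$ that is self-paired (i.e.\ $C^{-1}=C$, equivalently $C=Hg$ with $g^2\in H$) it restricts to an involution whose fixed points are the involutions lying in $C$.

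Finally I assemble $S$ to prove $(a1)\Rightarrow(a3)$ and $(b1)\Rightarrow(b3)$. On $H$, inversion on $H\setminus\{e\}$ has $m'$ fixed points with $m'\equiv|H|-1\pmod 2$; when $|H|$ is even, $m'$ is odd and hence positive so every $a\in\{0,\dots,|H|-1\}$ is attainable, whereas when $|H|$ is odd one may have $m'=0$ and only even $a$ are attainable, exactly matching the divisibility hypothesis on $a$ (and giving the extra parity for the total-code case, which forces $|H|$ even). On the non-trivial cosets I set $|S\cap C|=b$: a pair of distinct cosets $\{Hg,Hg^{-1}\}$ is handled by choosing any $b$ elements in one and their inverses in the other, while a self-paired coset is handled by the lemma. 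This is precisely where \eqref{equ9} enters, and I expect it to be the main obstacle: one must check that \eqref{equ9} is equivalent to the statement that every self-paired non-trivial coset contains an involution, so that $m\geq 1$ there and every $b\in\{0,\dots,|H|\}$ becomes attainable. Once this is verified the resulting $S$ is inverse-closed, avoids $e$, and realises the prescribed sizes, completing the construction.
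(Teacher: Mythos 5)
Your proposal is correct, and it follows essentially the same route as the paper's machinery: the reduction to prescribing $|S\cap H|=a$ and $|S\cap C|=b$ coset by coset is exactly Lemma~\ref{lem4} specialized to normal $H$ (where every double coset is a single coset), and your pairing/fixed-point construction is the content of Lemmas~\ref{lem9} and~\ref{lem11} in that case, with condition~\eqref{equ9} supplying the needed involution in each self-paired nontrivial coset. Note that the paper itself only quotes Theorem~\ref{thm6} from \cite{WXZ2020} rather than proving it, but your argument is a complete and correct proof along the intended lines; the equivalence of \eqref{equ9} with ``every self-paired nontrivial coset contains an involution,'' which you flag as the main obstacle, is in fact immediate since $gh$ ranges over $Hg=gH$ as $h$ ranges over $H$.
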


Since (a3) implies (a2) and (b3) implies (b2), Theorem \ref{thm6} essentially says that (a2) implies (a3) and (b2) implies (b3). In \cite[Question~1.3]{WXZ2020}, we asked whether these implications are still true if the subgroup $H$ of $G$ is not normal. In this paper we give an affirmative answer to this question in the case when $G$ is a generalized dihedral group or a group of order $4p$ or $pq$ for some primes $p$ and $q$. A group $G$ is called a \emph{generalized dihedral group} with respect to $A$ if $A$ is an abelian subgroup of $G$ and there exists an element $y\in G$ such that
\[
G=\langle A,y\mid y^2=e,x^y=x^{-1},\forall x\in A\rangle,
\]
where $x^y = y^{-1}xy$.

The main results in this paper are the following two theorems.

\begin{theorem}\label{thm4}
Let $G$ be a generalized dihedral group and let $H$ be a nontrivial subgroup of $G$. Then the following hold:
\begin{enumerate}[\rm (a)]
\item $H$ is a perfect code of $G$ if and only if $H$ is an $(a,b)$-regular set of $G$ for every pair of integers $a, b$ with $0\leqslant a\leqslant|H|-1$, $0\leqslant b\leqslant |H|$, and $a$ even when $|H|$ is odd;
\item $H$ is a total perfect code of $G$ if and only if $H$ is an $(a,b)$-regular set of $G$ for every pair of integers $a, b$ with $0\leqslant a\leqslant|H|-1$ and $0\leqslant b\leqslant |H|$.
\end{enumerate}
\end{theorem}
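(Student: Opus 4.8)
The plan is to translate the regular-set condition into a statement about connection sets and then split into two cases according to whether $H$ lies in the abelian subgroup $A$. Writing $\Cay(G,S)$ for the relevant Cayley graph, $H$ is an $(a,b)$-regular set of $G$ precisely when there is an inverse-closed $S\subseteq G\setminus\{e\}$ with $|S\cap H|=a$ and $|S\cap Hx|=b$ for every right coset $Hx\neq H$; indeed a vertex $x$ has exactly $|S\cap Hx^{-1}|$ neighbours in $H$, and $Hx^{-1}$ ranges over all non-identity right cosets as $x$ ranges over $G\setminus H$. Both ``if'' directions are then immediate: specialising to $(a,b)=(0,1)$ recovers a perfect code in (a), and to $(a,b)=(1,1)$ recovers a total perfect code in (b), each of these pairs lying in the stated parameter range. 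So the content is the ``only if'' directions, for which I would distinguish the cases $H\leqslant A$ and $H\not\leqslant A$.

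If $H\leqslant A$, then $H$ is normal in $G$: conjugation by any element of $A$ fixes $H$ since $A$ is abelian, and conjugation by $y$ sends each $h\in H$ to $h^{-1}\in H$. Hence I would simply invoke Theorem~\ref{thm6}. Its equivalence (a2)$\Leftrightarrow$(a3) gives part (a), once one checks that ``$\gcd(2,|H|-1)$ divides $a$'' coincides with ``$a$ even when $|H|$ is odd'', and its equivalence (b2)$\Leftrightarrow$(b3) gives part (b) directly.

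The substance is the case $H\not\leqslant A$. Here every element of $G\setminus A$ is an involution, so choosing an involution $w\in H\setminus A$ and setting $K=H\cap A$ gives $H=K\sqcup Kw$ with $|H|=2|K|$ even. I would first record the coset structure: the right cosets of $H$ are indexed by the cosets $Kt$ of $K$ in $A$, via $Ht=Kt\sqcup (Kt^{-1})w$, where the ``$A$-part'' $Kt$ lies in $A$ while the ``reflection part'' $(Kt^{-1})w$ lies in $Ay$ and consists entirely of involutions. Under inversion the reflection-part elements are fixed, whereas the $A$-part of $Ht$ maps bijectively onto the $A$-part of $Ht^{-1}$. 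Choosing one involution from the reflection part of each non-identity coset already produces an inverse-closed $S$ realising $(0,1)$, and adjoining $w$ realises $(1,1)$; thus in this case $H$ is automatically both a perfect code and a total perfect code, so both equivalences will hold once the full parameter range is realised.

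To realise an arbitrary admissible $(a,b)$ I would build $S=S_H\sqcup S_0$ with $S_H\subseteq H\setminus\{e\}$ inverse-closed of size $a$ and $S_0\subseteq G\setminus H$ inverse-closed with $|S_0\cap Hx|=b$ on every non-identity coset. Since $H$ contains the involution $w$, an inverse-closed subset of $H\setminus\{e\}$ of any size $a\in\{0,\dots,|H|-1\}$ exists, matching the fact that $|H|$ is even so that no parity restriction on $a$ is needed. For $S_0$ I would treat the non-identity cosets in inverse-pairs $\{Ht,Ht^{-1}\}$, together with singletons when $Ht=Ht^{-1}$, distributing $b$ elements to each coset by combining free involutions from the reflection parts with matched elements from the $A$-parts. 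The main obstacle is exactly the self-paired case $t^2\in K$ in which the $A$-part $Kt$ contains no involution: there the inverse-closed subsets of $Kt$ have only even sizes, so a prescribed $b$ cannot always be met from the $A$-part alone. I would resolve this by choosing the number $\beta_t\in\{0,\dots,|K|\}$ of reflection-part involutions so as to fix the parity, making $b-\beta_t$ an attainable (even) $A$-part size, using that the empty and full $A$-parts are always inverse-closed; a short interval check confirms such a choice exists for every $b\in\{0,\dots,|H|\}$. Assembling $S_H$ and $S_0$ then yields the required $(a,b)$-regular set, completing the case and the theorem.
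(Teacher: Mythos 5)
Your proposal is correct, and in its main case it takes a genuinely different route from the paper. Your opening reduction (realising $(a,b)$-regularity by an inverse-closed $S$ with $|S\cap H|=a$ and $|S\cap Hx|=b$ on every non-identity right coset) is a coset-wise restatement of the paper's group-ring Lemma~\ref{lem4}, so that part is equivalent. For $H\leqslant A$ you simply observe that $H$ is normal and cite Theorem~\ref{thm6}; the paper instead re-derives this case from scratch via its double-coset machinery (Lemmas~\ref{lem9} and~\ref{lem11} together with the involution criterion of Theorem~\ref{thm2}), but your shortcut is legitimate since the translation between ``$\gcd(2,|H|-1)$ divides $a$'' and ``$a$ even when $|H|$ is odd'' is exactly as you check. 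The real divergence is the case $H\nleq A$: the paper notes that $H\cap A\leqslant H\cap H^z$ forces $|HzH|/|H|\in\{1,2\}$ and then invokes the general-purpose Lemmas~\ref{lem9}, \ref{lem11} and~\ref{lem12} (again feeding in Theorem~\ref{thm2} for the needed involutions), whereas you work directly with the decomposition $Ht=Kt\sqcup(Kt^{-1})w$, $K=H\cap A$, exploiting that every element of $G\setminus A$ is an involution. This buys you two things the paper's argument does not make explicit: the perfect-code hypothesis becomes automatic in this case (one involution per reflection part already gives a $(0,1)$-regular set), and the construction of the $b$ transversals is fully explicit, with your parity adjustment of $\beta_t$ on self-paired cosets playing the role of the paper's case analysis in Lemma~\ref{lem12}. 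The trade-off is that the paper's lemmas are stated for general groups and are reused verbatim in the proof of Theorem~\ref{thm1}, while your construction is tailored to the generalized dihedral structure; both arguments are complete and correct, including the boundary case $b=|H|$ on a self-paired coset whose $A$-part has no involution, which you cover by taking the full (inverse-closed) $A$-part.
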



\begin{theorem}\label{thm1}
Let $G$ be a group of order $4p$ or $pq$ for some primes $p$ and $q$, and let $H$ be a nontrivial subgroup of $G$. Then the following hold:
\begin{enumerate}[\rm (a)]
\item $H$ is a perfect code of $G$ if and only if $H$ is an $(a,b)$-regular set of $G$ for every pair of integers $a, b$ with $0\leqslant a\leqslant|H|-1$, $0\leqslant b\leqslant |H|$, and $a$ even when $|H|$ is odd;
\item $H$ is a total perfect code of $G$ if and only if $H$ is an $(a,b)$-regular set of $G$ for every pair of integers $a, b$ with $0\leqslant a\leqslant|H|-1$ and $0\leqslant b\leqslant |H|$.
\end{enumerate}
\end{theorem}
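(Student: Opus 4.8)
The plan is to prove both parts simultaneously. The backward implications are immediate: specializing to $(a,b)=(0,1)$ shows that a subgroup which is an $(a,b)$-regular set for all admissible parameters is in particular a perfect code, and specializing to $(a,b)=(1,1)$ gives a total perfect code. So the content is the forward implication, and if $H$ is normal in $G$ this is exactly Theorem~\ref{thm6}. For non-normal $H$ I would reduce matters to a combinatorial construction: to realize $H$ as an $(a,b)$-regular set it suffices to build an inverse-closed $S\subseteq G\setminus\{e\}$ of the form $S=T\sqcup S_0$ with $T\subseteq H\setminus\{e\}$ and $S_0\subseteq G\setminus H$ such that $\Cay(H,T)$ is $a$-regular (i.e. $|T|=a$) and every nontrivial left coset $gH$ satisfies $|gH\cap S_0|=b$. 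This works because for $h\in H$ the neighbours of $h$ inside $H$ are governed only by $T$ (as $hh'^{-1}\in H$), while for $g\notin H$ the number of neighbours of $g$ in $H$ equals $|gH\cap S_0|$. The set $T$ of size $a$ exists exactly when $0\le a\le|H|-1$ and $a$ is even if $|H|$ is odd, which is where the parity hypothesis enters (an odd-order group has no involution, so every inverse-closed subset of $H\setminus\{e\}$ has even size).

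The group-theoretic heart of the argument is a classification: every non-normal subgroup $H$ of such a group $G$ either has order $2$ or admits a normal complement $N$, so that $G=N\rtimes H$. I would establish this by Sylow analysis. For $|G|=pq$ with $G$ non-abelian, the only non-normal subgroups are the Sylow $p$-subgroups, each complemented by the normal Sylow $q$-subgroup. For $|G|=4p$ with $p\ge 5$, the Sylow $p$-subgroup $P$ is normal, $G=P\rtimes Q$ with $|Q|=4$, and every non-normal subgroup has order $2$ or $4$, an order-$4$ one being a Sylow $2$-subgroup complemented by $P$; there are no non-normal subgroups of order $p$, $2p$ or $4p$ since each contains the unique Sylow $p$-subgroup and has index at most $2$. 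The small orders $|G|\in\{8,12\}$ I would treat individually: order $8$ reduces to $D_8$ (generalized dihedral, hence Theorem~\ref{thm4}) or to groups in which every subgroup is normal, while for order $12$ the non-normal subgroups of $A_4$ and of the dicyclic group each have order $2$ or are complemented by a normal subgroup (the Klein four-subgroup or the Sylow $3$-subgroup), and $D_{12}$ is covered by Theorem~\ref{thm4}. Throughout I would invoke Theorem~\ref{thm4} for any member of the list that is generalized dihedral.

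With the classification in hand, the two types are handled by explicit constructions. When $G=N\rtimes H$, writing each element uniquely as $nh$ with $n\in N$, $h\in H$, I would set $S_0=\{nh: n\in N\setminus\{e\},\ h\in R\}$ for an inverse-closed $R\subseteq H$ with $|R|=b$. A short computation shows $S_0=S_0^{-1}$ and that every nontrivial left coset $nH$ meets $S_0$ in exactly $|R|$ elements, so $b=|R|$ ranges over all of $\{0,1,\dots,|H|\}$ (using that $e\in H$ is self-inverse to realize each cardinality and an involution of $H$ when $|H|$ is even); taking $|R|=1$ recovers the perfect code, so no extra hypothesis is needed in this type. When $|H|=2$, only $b=1$ is nontrivial: here I would use the perfect-code hypothesis together with the tiling correspondence from the introduction to obtain an inverse-closed left transversal $A$ of $H$ with $e\in A$, and take $S_0=A\setminus\{e\}$, which is symmetric and meets each nontrivial left coset exactly once, while $b\in\{0,2\}$ use $S_0=\emptyset$ and $S_0=G\setminus H$. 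Pairing a valid $S_0$ with a suitable $T$ then yields $H$ as an $(a,b)$-regular set for every admissible pair; in part~(b) the parity constraint disappears because a total perfect code forces $|H|$ even (the internal degree $1$ requires $T=\{t\}$ with $t$ an involution), so every value of $a$ is attainable.

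I expect the main obstacle to be the classification step rather than the constructions. Verifying that each non-normal subgroup is either of order $2$ or complemented demands careful bookkeeping, and the delicate sub-cases are $G=\mathbb{Z}_p\rtimes\mathbb{Z}_4$ with faithful action, where the order-$2$ subgroup $\langle c^2\rangle$ (with $c$ a generator of $\mathbb{Z}_4$) is non-normal yet has no normal complement and so must be pushed into the $|H|=2$ branch, together with the order-$12$ groups, where whether the Sylow $3$-subgroup is normal must be tracked carefully. A secondary technical point is confirming in the semidirect-product construction that inverse-closed subsets $R\subseteq H$ of every cardinality $0,1,\dots,|H|$ exist and that the resulting internal degree matches the prescribed $a$ under the stated parity condition.
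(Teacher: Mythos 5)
Your proposal is correct, but it takes a genuinely different route from the paper's. The paper never classifies the subgroups structurally: after the same reduction you use (its Lemma~\ref{lem4} asks for an inverse-closed subset of $H\setminus\{e\}$ of size $a$ together with an inverse-closed union of $b$ pairwise disjoint right transversals of $H$ in $G\setminus H$, which is exactly your condition $|gH\cap S_0|=b$), it works locally, one double coset at a time. Each component $HxH\cup Hx^{-1}H$ of the coset graph is a $K_m$ or $K_{m,m}$ by Lemma~\ref{lem5}, and the required transversals are manufactured in Lemmas~\ref{lem9}--\ref{lem13} (plus Lemmas~\ref{lem8} and~\ref{lem14} for order $4p$) using K\"{o}nig's $1$-factorization theorem and the involution criterion of Theorem~\ref{thm2}; the hypothesis on $|G|$ enters only through the fact that $|HxH|/|H|$ is always $1$, $2$ or a prime. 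You instead argue globally: you prove that every non-normal subgroup of such a $G$ is either complemented, so that $G=N\rtimes H$ and the single explicit set $S_0=\{nh: n\in N\setminus\{e\},\ h\in R\}$ does all the work (indeed without even needing the perfect-code hypothesis), or has order $2$, where Theorem~\ref{thm2}(b) hands you the one transversal you need. Your classification checks out, including the delicate points you flag ($\langle c^2\rangle$ in $\mathbb{Z}_p\rtimes\mathbb{Z}_4$ with faithful action, and the order-$2$ subgroups of $A_4$, neither of which has a normal complement but both of which land safely in your order-$2$ branch), and your handling of the parity of $a$ and of part (b) agrees with the paper's. The trade-off is that your argument buys a cleaner, more uniform construction at the price of Sylow-theoretic bookkeeping specific to orders $4p$ and $pq$, whereas the paper's coset-by-coset method is less elegant but does not require knowledge of the subgroup lattice and is the form in which one would hope to extend the theorem to other families of groups.
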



As mentioned earlier, every regular graph admitting an $(a,b)$-regular set has $a-b$ as one of its eigenvalues. Obviously, $\{a-b: 0\leqslant a\leqslant|H|-1, 0\leqslant b\leqslant |H|\} = \{-|H|, -|H|+1, \dots, |H|-1\}$, and also $\{a-b: 0\leqslant a\leqslant|H|-1, 0\leqslant b\leqslant |H|, \mbox{$a$ is even}\} = \{-|H|, -|H|+1, \dots, |H|-1\}$ when $|H|$ is odd. So part (a) in Theorems \ref{thm6} and \ref{thm4} implies the following result.

\begin{corollary}
Let $G$ be a generalized dihedral group or a group of order $4p$ or $pq$ for some primes $p$ and $q$. Let $H$ be a nontrivial subgroup of $G$. If $H$ is a perfect code of $G$, then for every integer $\ell$ between $-|H|$ and $|H|-1$ there exists a Cayley graph of $G$ which has $\ell$ as one of its eigenvalues.
\end{corollary}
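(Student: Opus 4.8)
The plan is to read off the desired eigenvalues directly from the regular-set descriptions supplied by the two main theorems. Since $G$ is a generalized dihedral group or a group of order $4p$ or $pq$, and $H$ is a perfect code of $G$, part (a) of the relevant theorem (Theorem~\ref{thm4} in the generalized dihedral case, Theorem~\ref{thm1} in the order $4p$ or $pq$ case) guarantees that $H$ is an $(a,b)$-regular set of $G$ for every pair of integers $a,b$ with $0\leqslant a\leqslant|H|-1$ and $0\leqslant b\leqslant|H|$, subject only to $a$ being even when $|H|$ is odd. My goal is therefore to convert each such regular set into a Cayley graph eigenvalue, noting that different target eigenvalues may be produced by different Cayley graphs, which suffices for the existential statement of the corollary.

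First I would invoke the eigenvalue principle recalled in the introduction. Fix an admissible pair $(a,b)$ and let $\Gamma=\Cay(G,S)$ be a $k$-regular Cayley graph of $G$ admitting $H$ as an $(a,b)$-regular set, where $k=|S|$. Then $\{H,G\setminus H\}$ is an equitable partition of $\Gamma$ with quotient matrix
\[
M=\pmat{a & k-a\\ b & k-b}.
\]
A direct computation shows that $M$ has trace $a+k-b$ and determinant $k(a-b)$, so its eigenvalues are $k$ and $a-b$; by \cite[Theorem~9.3.3]{GR2001} both are eigenvalues of $\Gamma$. Hence this $\Gamma$ has $a-b$ among its eigenvalues.

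It then remains to check that, as $(a,b)$ ranges over all admissible pairs, the difference $a-b$ attains every integer $\ell$ with $-|H|\leqslant\ell\leqslant|H|-1$. When $|H|$ is even there is no parity restriction: taking $a=0$ gives $a-b=-b$ for $0\leqslant b\leqslant|H|$, realizing $\{-|H|,\dots,0\}$, while taking $b=0$ gives $a-b=a$ for $0\leqslant a\leqslant|H|-1$, realizing $\{0,\dots,|H|-1\}$, and the union is the whole range. When $|H|$ is odd the only constraint is that $a$ be even, and since $|H|-1$ is then even one can still realize every target $\ell$: for $\ell\geqslant-1$ take $a=|H|-1$ and $b=|H|-1-\ell$, and for $\ell\leqslant-2$ take $a=0$ and $b=-\ell$, checking in each case that $b$ lies in $\{0,\dots,|H|\}$. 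This recovers exactly the set identity stated immediately before the corollary.

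The argument is a direct consequence of the two main theorems combined with the standard fact relating quotient-matrix eigenvalues to graph eigenvalues, so I do not anticipate a serious obstacle. The only point requiring genuine care is confirming that the parity restriction on $a$ in the odd case does not eliminate any target eigenvalue, and the explicit choices of $(a,b)$ above settle precisely this.
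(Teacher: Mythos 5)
Your proposal is correct and follows essentially the same route as the paper, which derives the corollary directly from part (a) of the main theorems via the standard fact that an $(a,b)$-regular set in a $k$-regular graph forces $a-b$ to be an eigenvalue, together with the observation that the admissible differences $a-b$ (even under the parity constraint on $a$ when $|H|$ is odd) exhaust $\{-|H|,\dots,|H|-1\}$. Your explicit choices of $(a,b)$ simply make precise the set identity the paper states immediately before the corollary.
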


A few comments are in order. As we will see in Lemma \ref{lem4}, to prove that (a2) implies (a3) for a group-subgroup pair $(G, H)$ a crucial step is to prove that for each integer $b$ between $0$ and $|H|$ there exists an inverse-closed subset of $G\setminus H$ which is the union of $b$ pairwise disjoint right transversals of $H$ in $G\setminus H$. This turns out to be challenging for a general group-subgroup pair $(G, H)$. However, when $G$ is a generalized dihedral group or a group of order $4p$ or $pq$, we manage to construct such a subset using the fact that in these cases the number of right cosets of $H$ in $G$ contained in each double coset of $H$ in $G$ is small or a prime number. It would be interesting to study for which families of groups the statements in Theorems \ref{thm4} and \ref{thm1} hold. At the time of writing we are not aware of any group for which these statements are not true. In fact, we have manually checked all groups of order up to $40$ and found that these statements hold for all of them.

The rest of the paper is structured as follows. In Section~\ref{sec1}, we present characterizations of perfect codes and regular sets in Cayley graphs. Based on these characterizations we establish several technical lemmas in Section~\ref{sec2}. Using these preparatory results, proofs of Theorem~\ref{thm4} and Theorem~\ref{thm1} will be given in Section~\ref{sec3}.

\section{Characterizations of perfect codes and regular sets}\label{sec1}

Let $G$ be a group and $H$ a subgroup of $G$. Denote by $[G{:}H]$ the set of right cosets of $H$ in $G$, and define a binary relation $\sim$ on $[G{:}H]$ such that
\[
Hx\sim Hy\Leftrightarrow y\in Hx^{-1}H.
\]
It is readily seen that the relation $\sim$ is well defined and symmetric. Define $\Gamma$ to be the graph with vertex set $[G{:}H]$ such that $\{Hx,Hy\}$ is an edge if and only if $Hx\neq Hy$ and $Hx\sim Hy$.
Then for any $x\in G$ and $h\in H$,
\begin{equation}\label{eq1}
Hxh\sim Hy\text{ for all $y\in x^{-1}H$}\quad\text{and}\quad Hx^{-1}h\sim Hz\text{ for all $z\in xH$}.
\end{equation}
Since each double coset of $H$ is a union of right cosets of $H$ in $G$, we may view $HxH$ and $Hx^{-1}H$ as sets of vertices of $\Gamma$. Then~\eqref{eq1} shows that the subgraph of $\Gamma$ induced by $HxH\cup Hx^{-1}H$ is a connected component of $\Gamma$, which we denote by $\Gamma_x$.
Note that in $\Gamma_x$ each edge induces a pair of inverse elements. Note also that $\Gamma$ and $\Gamma_x$ depend on $G$ and $H$, but in our subsequently discussion the groups underlying $\Gamma$ and $\Gamma_x$ should be clear from the context.

The following lemma gives the structure of $\Gamma_x$, where $H^x = x^{-1}Hx$.

\begin{lemma}\label{lem5}\cite{CWX2020}
Let $G$ be a group and let $H$ be a subgroup of $G$. Let $\Gamma_x$ be as above and let $m=|H|/|H\cap H^x|$. If $HxH=Hx^{-1}H$, then $\Gamma_x$ is the complete graph $K_m$. If $HxH\neq Hx^{-1}H$, then $\Gamma_x$ is the complete bipartite graph $K_{m,m}$. 
\end{lemma}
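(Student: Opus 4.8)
The plan is to make the relation $\sim$ completely explicit on the vertex set $HxH\cup Hx^{-1}H$ of $\Gamma_x$: first count how many right cosets of $H$ lie inside each of $HxH$ and $Hx^{-1}H$, and then determine precisely which pairs among these cosets are adjacent. For the count, the right cosets inside $HxH$ are exactly $Hxh$ with $h\in H$, and $Hxh=Hxh'$ if and only if $h'h^{-1}\in x^{-1}Hx=H^x$, that is, if and only if $h'h^{-1}\in H\cap H^x$. Thus these cosets are in bijection with the cosets of $H\cap H^x$ in $H$, giving exactly $m=|H|/|H\cap H^x|$ of them. Applying the same argument to $x^{-1}$ shows that $Hx^{-1}H$ contains $|H|/|H\cap H^{x^{-1}}|$ right cosets, and since conjugation by $x$ carries $H\cap H^{x^{-1}}$ isomorphically onto $H\cap H^x$, this number is also $m$.

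Next I would read off the adjacencies directly from the definition of $\sim$. For any $h\in H$ and any coset $Hv$ we have
\[
Hxh\sim Hv\iff v\in H(xh)^{-1}H=Hx^{-1}H,
\]
so $Hxh$ is $\sim$-related to $Hv$ exactly when $Hv$ is one of the $m$ cosets comprising $Hx^{-1}H$; this is essentially~\eqref{eq1}. The decisive observation is the standard fact that two double cosets of $H$ in $G$ are either equal or disjoint. Consequently, whether a coset lying in $HxH$ is related to another coset lying in $HxH$ depends solely on whether $HxH$ and $Hx^{-1}H$ coincide.

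This immediately yields the two cases. If $HxH=Hx^{-1}H$, then each coset $Hxh'$ is contained in $Hx^{-1}H$, so $Hxh\sim Hxh'$ for all $h,h'\in H$; discarding loops (an edge requires distinct endpoints), the $m$ cosets become pairwise adjacent and $\Gamma_x=K_m$. If instead $HxH\neq Hx^{-1}H$, then $HxH\cap Hx^{-1}H=\emptyset$, so no two cosets inside $HxH$ are adjacent and, symmetrically, no two inside $Hx^{-1}H$ are adjacent, whereas every coset of $HxH$ is adjacent to every coset of $Hx^{-1}H$. With both parts of size $m$, this is precisely $K_{m,m}$.

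The coset counting is routine; the one point that needs care is recognizing that adjacency within a single double coset is entirely controlled by the equal-or-disjoint dichotomy for double cosets, as this is exactly what separates the complete-graph case from the complete-bipartite case.
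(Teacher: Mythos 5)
Your argument is correct. Note that the paper itself gives no proof of this lemma -- it is quoted from \cite{CWX2020} -- but your direct verification is exactly the intended one: the coset count via the bijection with cosets of $H\cap H^x$ in $H$, the observation that $Hxh\sim Hv$ precisely when $Hv\subseteq Hx^{-1}H$, and the equal-or-disjoint dichotomy for double cosets together pin down $\Gamma_x$ as $K_m$ or $K_{m,m}$. No gaps.
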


\begin{theorem}\label{thm2}\cite{CWX2020}
Let $G$ be a group and let $H$ be a subgroup of $G$.
Then the following statements are equivalent:
\begin{enumerate}[{\rm(a)}]
\item $H$ is a perfect code of $G$;
\item there exists an inverse-closed right transversal of $H$ in $G$;
\item for each $x\in G$ such that $x^2\in H$ and $|H|/|H\cap H^x|$ is odd, there exists $y\in Hx$ such that $y^2=e$;
\item for each $x\in G$ such that $HxH=Hx^{-1}H$ and $|H|/|H\cap H^x|$ is odd, there exists $y\in Hx$ such that $y^2=e$;
\item for each $x\in G\setminus H$ such that $x^2\in H$ and $|H|/|H\cap H^x|$ is odd, there exists an involution in $Hx$;
\item for each $x\in G\setminus H$ such that $HxH=Hx^{-1}H$ and $|H|/|H\cap H^x|$ is odd, there exists an involution in $Hx$.
\end{enumerate}
\end{theorem}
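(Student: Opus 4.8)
The plan is to prove the chain of equivalences by first reducing ``$H$ is a perfect code'' to the existence of an inverse-closed transversal, then analysing such transversals component-by-component on the graph $\Gamma$ introduced above, and finally converting the resulting local conditions into the pointwise conditions (c)--(f).

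First I would establish (a)$\Leftrightarrow$(b). If $H$ is a perfect code in some $\Cay(G,S)$, then the closed neighbourhoods $\{(\{e\}\cup S)h : h\in H\}$ partition $G$, so $T:=\{e\}\cup S$ is a transversal of $H$ that is inverse-closed (as $S$ is inverse-closed) and meets $H$ only in $e$; conversely any inverse-closed right transversal can be adjusted so that $e$ represents $H$, and then $S:=T\setminus\{e\}$ exhibits $H$ as a perfect code. Since $T$ is inverse-closed, being a right transversal is the same as being a left transversal, so (b) is symmetric in left and right cosets.

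Next I would reduce (b) to a condition on each connected component $\Gamma_x$. By \eqref{eq1} the coset of $g^{-1}$ is $\sim$-related to the coset of $g$, so inversion preserves the components of $\Gamma$; hence an inverse-closed right transversal exists if and only if, for each $x$, the right cosets inside $HxH\cup Hx^{-1}H$ admit an inverse-closed system of representatives. I would then invoke Lemma~\ref{lem5}. When $HxH\neq Hx^{-1}H$ (so $\Gamma_x\cong K_{m,m}$) such a system always exists: a common transversal of the right-coset and left-coset partitions of $HxH$ (which exists because the corresponding bipartite incidence graph is regular) yields representatives in $HxH$ whose inverses form representatives in $Hx^{-1}H$. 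When $HxH=Hx^{-1}H$ (so $\Gamma_x\cong K_m$), an inverse-closed system amounts to a matching of the $m$ cosets into inverse-pairs together with some cosets assigned a self-inverse representative; a parity count forces the number of the latter to have the same parity as $m$. Thus for $m$ even a perfect matching always works, while for $m$ odd at least one coset must contain an involution, and one such coset suffices. This shows that (b) holds if and only if every double coset with $HxH=Hx^{-1}H$ and $m:=|H|/|H\cap H^x|$ odd contains an involution.

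The crucial step, and the one I expect to be the main obstacle, is to turn this ``some coset of the double coset'' condition into the pointwise conditions (c)--(f), which speak about the specific coset $Hx$. Here I would prove a dichotomy: conjugation by $H$ preserves involutions and acts on the right cosets inside a fixed double coset $D=HxH$ by $Hx_i\mapsto Hx_ih$, and this action is transitive; therefore \emph{either every right coset in $D$ contains an involution, or none does}. Combined with the observation that $HxH=Hx^{-1}H$ forces $xhx\in H$ for some $h\in H$, and hence that $D$ contains an element whose square lies in $H$, the dichotomy lets me pass freely between ``some coset'', ``the coset $Hx$'', and ``every coset''. Finally I would reconcile the hypotheses: $x^2\in H$ implies $HxH=Hx^{-1}H$, which gives (d)$\Rightarrow$(c) directly and (c)$\Rightarrow$(d) via the dichotomy; and the cases $x\in H$ are trivial ($m=1$, take $y=e$), which disposes of the difference between (c)/(d) and (e)/(f). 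The remaining care lies in the two matching arguments of the previous paragraph---the regular bipartite matching for $K_{m,m}$ and the parity count for odd $K_m$---which I would spell out in full.
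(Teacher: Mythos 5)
Your proposal is correct: the reduction (a)$\Leftrightarrow$(b), the component-by-component analysis of $\Gamma_x$ via Lemma~\ref{lem5} (perfect matchings in $K_{m,m}$ and in $K_m$ for $m$ even, plus the parity argument forcing an involution when $m$ is odd), and the ``all or nothing'' dichotomy of Lemma~\ref{lem2} together with the observation that $HxH=Hx^{-1}H$ yields an element of $Hx$ whose square lies in $H$, are exactly the ingredients underlying the cited proof. Note that the present paper does not reprove Theorem~\ref{thm2} but imports it from \cite{CWX2020}; your argument is essentially the same approach as the one there, for which the paper's Section~\ref{sec1} sets up precisely the machinery ($\Gamma$, $\Gamma_x$, Lemmas~\ref{lem5} and~\ref{lem2}) you use.
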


As usual, for a group $G$, denote by $\mathbb{Z}[G]$ the group ring of $G$ over $\mathbb{Z}$. For a subset $A$ of group $G$, denote
\[
\overline{A}=\sum_{g\in G}\mu_A(g)g\in \mathbb{Z}[G],
\]
where
\[
\mu_A(g)=\left\{\begin{aligned}
1,&\quad g\in A;\\
0,&\quad g\in G\setminus A.
\end{aligned}
\right.
\]
Since $\mathbb{Z}[G]$ is a ring, for any subsets $A, B$ of $G$, the product $\overline{A} \cdot \overline{B}$ is well defined and is equal to $\sum_{g, h \in G} (\mu_A(g) \mu_B(h)) gh$.

\begin{lemma}\label{lem4}  
Let $G$ be a group, $H$ a subgroup of $G$, and $S$ an inverse-closed subset of $G\setminus\{e\}$. Let $a$ and $b$ be nonnegative integers. Suppose that $H$ is a perfect code in some Cayley graph $\Cay(G,S_0)$ of $G$. Then $H$ is an $(a,b)$-regular set in $\Cay(G,S)$ if and only if $|S\cap H|=a$ and $\overline{S\setminus H}\cdot \overline{H}=b\,\overline{S_0}\cdot \overline{H}$.
\end{lemma}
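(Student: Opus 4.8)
The plan is to translate both the $(a,b)$-regular-set condition and the claimed group-ring identity into statements that count how $S$ (respectively $S_0$) meets the cosets of $H$, and then to check that these two coset-counting statements coincide. First I would record that in $\Cay(G,S)$ the neighborhood of a vertex $v$ is $Sv$, so the defining conditions of an $(a,b)$-regular set read $|Sv\cap H|=a$ for every $v\in H$ and $|Sv\cap H|=b$ for every $v\in G\setminus H$. For $v\in H$ one has $sv\in H\iff s\in H$, whence $Sv\cap H=(S\cap H)v$ and $|Sv\cap H|=|S\cap H|$; thus the first condition is equivalent to $|S\cap H|=a$, which accounts for that requirement in the statement. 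For $v\in G\setminus H$ one has $sv\in H\iff s\in Hv^{-1}$, and since $Hv^{-1}\subseteq G\setminus H$ this gives $|Sv\cap H|=|(S\setminus H)\cap Hv^{-1}|$; hence the second condition is equivalent to
\[
|(S\setminus H)\cap Hw|=b\quad\text{for every right coset }Hw\neq H.
\]

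Next I would compute the coefficients of the two sides of the group-ring identity. For a subset $A\subseteq G$ and $g\in G$, the coefficient of $g$ in $\overline{A}\cdot\overline{H}$ equals $|gH\cap A|$, because $g=ah$ with $a\in A$ and $h\in H$ forces $a=gh^{-1}\in gH$. Applying this with $A=S\setminus H$ and with $A=S_0$, the asserted identity $\overline{S\setminus H}\cdot\overline{H}=b\,\overline{S_0}\cdot\overline{H}$ becomes equivalent to
\[
|gH\cap(S\setminus H)|=b\,|gH\cap S_0|\quad\text{for every }g\in G.
\]
Here the hypothesis that $H$ is a perfect code in $\Cay(G,S_0)$ enters decisively. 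Being a perfect code forces $S_0\cap H=\emptyset$ together with $|S_0\cap Hw|=1$ for every right coset $Hw\neq H$; since $S_0$, being the connection set of a Cayley graph, is inverse-closed and inversion interchanges the right coset $Hw$ with the left coset $w^{-1}H$, this yields $|gH\cap S_0|=1$ for every $g\in G\setminus H$ and $|gH\cap S_0|=0$ for $g\in H$. In other words $\overline{S_0}\cdot\overline{H}=\overline{G}-\overline{H}$, so the right-hand coefficient is simply $b$ on every nontrivial left coset and $0$ on $H$. Consequently the group-ring identity is equivalent to $|gH\cap S|=b$ for every left coset $gH\neq H$ (the equality on $H$ holding automatically because $|gH\cap(S\setminus H)|=0$ there).

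Finally I would reconcile the two coset-counting statements. The regular-set condition lives on right cosets, namely $|(S\setminus H)\cap Hw|=b$, while the group-ring identity lives on left cosets, namely $|gH\cap S|=b$; the bridge is again that $S$ is inverse-closed, so $|S\cap Hw|=|S^{-1}\cap w^{-1}H|=|S\cap w^{-1}H|$, and as $Hw$ runs over the nontrivial right cosets, $w^{-1}H$ runs over the nontrivial left cosets. This shows the two statements are identical, which combined with the reduction of the first condition to $|S\cap H|=a$ completes the equivalence in both directions. I expect the only genuinely delicate point to be exactly this left/right bookkeeping: the neighborhood $Sv$ naturally produces right-coset data for the regular-set condition, whereas right multiplication by $\overline{H}$ in $\mathbb{Z}[G]$ produces left-coset data, and the inverse-closedness of both $S$ and $S_0$ must be invoked to pass between them. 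Everything else is a routine coefficient comparison in the group ring.
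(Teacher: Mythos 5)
Your proof is correct and follows essentially the same route as the paper's: both reduce the statement to the identity $\overline{S_0}\cdot\overline{H}=\overline{G\setminus H}$ (from the perfect-code hypothesis) and to the characterization of $(a,b)$-regular sets as $\overline{S}\cdot\overline{H}=a\,\overline{H}+b\,\overline{G\setminus H}$, which you verify coefficient by coefficient. The only difference is presentational: you spell out the left-coset versus right-coset bookkeeping and the precise role of inverse-closedness of $S$ and $S_0$, which the paper's more compact group-ring manipulation leaves implicit.
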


\begin{proof}
Since $H$ is a perfect code in $\Cay(G,S_0)$, we have $\overline{G}=\overline{S_0\cup\{e\}}\cdot\overline{H}=\overline{S_0}\cdot\overline{H}+\overline{H}$. Hence $\overline{G\setminus H}=\overline{G}-\overline{H}=\overline{S_0}\cdot\overline{H}$. On the other hand, $H$ is an $(a,b)$-regular set in $\Cay(G,S)$ if and only if $\overline{S}\cdot\overline{H}=a\,\overline{H}+b\,\overline{G\setminus H}$, which holds if and only if $|S\cap H|=a$ and $\overline{S\setminus H}\cdot \overline{H}=b\,\overline{G\setminus H}$ as $S$ is the disjoint union of $S\cap H$ and $S\setminus H$ and $\overline{h}\cdot \overline{H}=\overline{H}$ for any $h\in H$. Therefore, $H$ is an $(a,b)$-regular set in $\Cay(G,S)$ if and only if $|S\cap H|=a$ and $\overline{S\setminus H}\cdot\overline{H}=b\,\overline{S_0}\cdot\overline{H}$.
\end{proof}

Let $G$ be a group, let $H$ be a subgroup of $G$, and let $K$ be a set of right cosets of $H$ in $G$. A \emph{transversal of $H$ in $K$} is a subset of $G$ which is formed by taking exactly one element from each right coset of $H$ in $K$. By Lemma~\ref{lem4}, to prove that $H$ is an $(a,b)$-regular set of $G$ it suffices to prove the existence of an inverse-closed subset $S_a$ of $H\setminus\{e\}$ with size $a$ and an inverse-closed subset $S_b$ of $G\setminus H$ which is the union of $b$ pairwise disjoint right transversals of $H$ in $G\setminus H$. In fact, Lemma~\ref{lem4} ensures that $H$ is an $(a,b)$-regular set in $\Cay(G, S)$, where $S = S_a \cup S_b$.

\section{Technical lemmas}\label{sec2}

\begin{lemma}\label{lem2}
Let $G$ be a group, let $H$ be a nontrivial subgroup of $G$, and let $x\in G$. If $HxH\neq Hx^{-1}H$, then there is no involution in $HxH\cup Hx^{-1}H$. If there is an involution in $HxH$, then there is an involution in each right coset in $HxH$.
\end{lemma}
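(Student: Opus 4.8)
The plan is to establish the two assertions separately by elementary coset manipulations, without needing to invoke the graph $\Gamma_x$ or Lemma~\ref{lem5} directly.

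For the first assertion I would argue by contradiction, exploiting the fact that inversion interchanges $HxH$ and $Hx^{-1}H$ while fixing every involution. Suppose some involution $t$ lies in $HxH\cup Hx^{-1}H$. Replacing $x$ by $x^{-1}$ if necessary (the hypothesis $HxH\neq Hx^{-1}H$ and the set $HxH\cup Hx^{-1}H$ are both symmetric in $x$ and $x^{-1}$), I may assume $t\in HxH$, say $t=h_1xh_2$ with $h_1,h_2\in H$. Since $t^2=e$, we have $t=t^{-1}=h_2^{-1}x^{-1}h_1^{-1}\in Hx^{-1}H$, so $t\in HxH\cap Hx^{-1}H$. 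As distinct double cosets of $H$ are disjoint, this forces $HxH=Hx^{-1}H$, contradicting the hypothesis. Hence no involution can lie in $HxH\cup Hx^{-1}H$.

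For the second assertion, suppose $t\in HxH$ is an involution and let $Hw$ be an arbitrary right coset contained in $HxH$. Writing $w=h_1th_2$ with $h_1,h_2\in H$, the left factor $h_1$ is absorbed into $H$, so $Hw=Hth_2$. The key step is to choose the correct involution inside this coset: rather than translating $t$ or conjugating by $w$, I would take the conjugate $s:=h_2^{-1}th_2$. This $s$ is again an involution, being conjugate to $t$. To see that $s\in Hw$, use $t^{-1}=t$ to compute $s(th_2)^{-1}=(h_2^{-1}th_2)(h_2^{-1}t)=h_2^{-1}t^2=h_2^{-1}\in H$, whence $s\in Hth_2=Hw$. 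Thus every right coset of $HxH$ contains an involution.

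The only point requiring a small idea is the identification of the right conjugate in the second part—conjugating $t$ precisely by the \emph{right} factor $h_2$ is what lands an involution in $Hth_2$—and once this is seen the verification is a one-line computation. The first assertion and the fact that conjugation preserves involutions are then routine, so I do not anticipate any genuine obstacle; the whole argument is short and self-contained.
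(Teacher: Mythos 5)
Your proof is correct and follows essentially the same route as the paper: the first assertion via the disjointness of the distinct double cosets $HxH$ and $Hx^{-1}H$ together with $t=t^{-1}$, and the second by conjugating the given involution by a suitable element of $H$ (the paper conjugates $y$ by each $h\in H$ to hit every coset $Hyh$; you conjugate by the specific $h_2$ with $Hw=Hth_2$, which is the same idea). No gaps.
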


\begin{proof}
First assume that $HxH\neq Hx^{-1}H$. Then $HxH\cap Hx^{-1}H=\emptyset$. Suppose that $y$ is an involution in $HxH$. Then $y^{-1}\in (HxH)^{-1}=Hx^{-1}H$, which is a contradiction.

Next assume that there is an involution in $HxH$, say, $y$. Then for each $h\in H$, the element $h^{-1}yh$ is an involution in $Hxh$. This shows that each right coset in $HxH$ contains an involution.
\end{proof}

To obtain transversals of $H$ in $G\setminus H$ it suffices to find those of $H$ in $HxH\cup Hx^{-1}H$ for each $\Gamma_x$ with $x\in G$. For this purpose, we will analyze the cases $HxH=Hx^{-1}H$ and $HxH\neq Hx^{-1}H$ separately. First, we consider the case $HxH\neq Hx^{-1}H$.

\begin{lemma}\label{lem9}
Let $G$ be a group, let $H$ be a nontrivial subgroup of $G$, and let $x\in G$. Suppose that $HxH\neq Hx^{-1}H$ and $|HxH|/|H|=1$ or $2$. Then there exist $|H|$ pairwise disjoint inverse-closed right transversals of $H$ in $HxH\cup Hx^{-1}H$. In particular, for any integer $0\leqslant b\leqslant |H|$, there exist $b$ pairwise disjoint right transversals of $H$ in $HxH\cup Hx^{-1}H$ whose union is inverse-closed.
\end{lemma}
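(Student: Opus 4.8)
The plan is to set $\Omega=HxH\cup Hx^{-1}H$, to work with the fixed-point-free involution $\iota\colon g\mapsto g^{-1}$ on $\Omega$, and to use the observation that an inverse-closed right transversal of $H$ in $\Omega$ is exactly a union of $\iota$-pairs $\{g,g^{-1}\}$ meeting each right coset of $H$ in $\Omega$ precisely once. First I would record the basic structure: by Lemma~\ref{lem5}, since $HxH\neq Hx^{-1}H$, the component $\Gamma_x$ is the complete bipartite graph $K_{m,m}$ with $m=|H|/|H\cap H^x|=|HxH|/|H|\in\{1,2\}$, so $\Omega$ is the disjoint union of $m$ right cosets $C_1,\dots,C_m$ of $H$ inside $HxH$ and $m$ right cosets $D_1,\dots,D_m$ inside $Hx^{-1}H$, with $|\Omega|=2m|H|$. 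Since $x\notin H$ we have $e\notin\Omega$, and by Lemma~\ref{lem2} there is no involution in $\Omega$; hence $\iota$ has no fixed point on $\Omega$ and partitions $\Omega$ into $m|H|$ pairs $\{g,g^{-1}\}$, each having its $HxH$-representative in some $C_i$ and its $Hx^{-1}H$-representative in some $D_j$, because $\iota$ interchanges $HxH$ and $Hx^{-1}H$.

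The case $m=1$ is immediate: here $\Omega=Hx\cup Hx^{-1}$, and the $|H|$ pairs $\{g,g^{-1}\}$ with $g\in Hx$ are themselves pairwise disjoint inverse-closed right transversals of $H$ in $\Omega$ whose union is all of $\Omega$.

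For $m=2$ I would argue by counting pair-types. For $g\in HxH$ call $(i,j)$ its type if $g\in C_i$ and $g^{-1}\in D_j$, and set $n_{ij}=|\{g\in C_i: g^{-1}\in D_j\}|$. Summing over $j$ gives $\sum_j n_{ij}=|C_i|=|H|$, while summing over $i$ gives $\sum_i n_{ij}=|\{w\in D_j: w^{-1}\in HxH\}|=|D_j|=|H|$, the last equality because $w^{-1}\in HxH$ for every $w\in D_j$. For a $2\times2$ nonnegative integer matrix all of whose row and column sums equal $|H|$ this forces $n_{11}=n_{22}$ and $n_{12}=n_{21}$. I would then pair each type-$(1,1)$ pair with a type-$(2,2)$ pair and each type-$(1,2)$ pair with a type-$(2,1)$ pair: every such union of two $\iota$-pairs meets $C_1,C_2,D_1,D_2$ exactly once and is inverse-closed, hence is an inverse-closed right transversal of $H$ in $\Omega$. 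Because $n_{11}=n_{22}$ and $n_{12}=n_{21}$, these two families of pairings together use up all $2|H|$ pairs and yield exactly $n_{11}+n_{12}=|H|$ pairwise disjoint such transversals, as required.

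Finally, the ``in particular'' clause is then immediate: for $0\leqslant b\leqslant|H|$ one simply takes any $b$ of these $|H|$ pairwise disjoint inverse-closed transversals, whose union is again inverse-closed. The only genuinely substantive point is the $m=2$ balance $n_{11}=n_{22}$, $n_{12}=n_{21}$, which is precisely what makes the two pairing families consume every $\iota$-pair; this is where the hypothesis $|HxH|/|H|\leqslant 2$ is used, since for larger $m$ one would instead need a Birkhoff--von Neumann type decomposition of the (doubly balanced) type matrix $(n_{ij})$ into permutation matrices.
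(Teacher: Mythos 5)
Your proposal is correct and follows essentially the same route as the paper: the $m=1$ case is identical, and in the $m=2$ case your type counts $n_{11},n_{12},n_{21},n_{22}$ are exactly the paper's sets $S,U,V,T$, with the same row/column-sum argument giving $n_{11}=n_{22}$ and $n_{12}=n_{21}$ and the same quadruples $\{g,h,g^{-1},h^{-1}\}$ serving as the transversals. The only cosmetic difference is your explicit framing via the fixed-point-free involution $\iota$ and the doubly balanced count matrix, which the paper leaves implicit.
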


\begin{proof}
Since $HxH\neq Hx^{-1}H$, we have $HxH\cap Hx^{-1}H=\emptyset$.

First suppose that $|HxH|/|H|=1$. Then each of $HxH$ and $Hx^{-1}H$ consists of a single right coset of $H$. For each $r\in HxH$, since $r^{-1}\in Hx^{-1}H$, we see that $\{r,r^{-1}\}$ is a right transversal of $H$ in $HxH\cup Hx^{-1}H$. Write $HxH=\{r_1,\dots,r_{|H|}\}$. Then
\[
\{r_1,r_1^{-1}\},\dots,\{r_{|H|},r_{|H|}^{-1}\}
\]
are $|H|$ pairwise disjoint inverse-closed right transversals of $H$ in $HxH\cup Hx^{-1}H$.

Next suppose that $|HxH|/|H|=2$. Then each of $HxH$ and $Hx^{-1}H$ consists of two right cosets of $H$. Let the right cosets of $H$ in $HxH$ be $Hx_1$ and $Hx_2$, and let the right cosets of $H$ in $Hx^{-1}H$ be $Hy_1$ and $Hy_2$. Take
\begin{align*}
S=\{z\mid z\in Hx_1,\,z^{-1}\in Hy_1\},\quad &T=\{z\mid z\in Hx_2,\,z^{-1}\in Hy_2\},\\
U=\{z\mid z\in Hx_1,\,z^{-1}\in Hy_2\},\quad &V=\{z\mid z\in Hx_2,\,z^{-1}\in Hy_1\}.
\end{align*}
Then $S$, $T$, $U$, $V$, $S^{-1}$, $T^{-1}$, $U^{-1}$, $V^{-1}$ are pairwise disjoint, and we have
\[
Hx_1=S\cup U,\ \ Hx_2=T\cup V,\ \ Hy_1=S^{-1}\cup V^{-1},\ \ Hy_2=T^{-1}\cup U^{-1}.
\]
Hence $|H|=|S|+|U|=|T|+|V|=|S|+|V|=|T|+|U|$, which implies that $|S|=|T|$ and $|U|=|V|$. Write $S=\{s_1,\dots,s_c\}$, $T=\{t_1,\dots,t_c\}$, $U=\{u_1,\dots,u_d\}$ and $V=\{v_1,\dots,v_d\}$, where $c+d = |H|$. Then for each $i\in\{1,\dots,c\}$ and $j\in\{1,\dots,d\}$, the sets $\{s_i,t_i,s_i^{-1},t_i^{-1}\}$ and $\{u_j,v_j,u_j^{-1},v_j^{-1}\}$ are both right transversals of $H$ in $HxH\cup Hx^{-1}H=Hx_1\cup Hx_2\cup Hy_1\cup Hy_2$. Therefore,
$$\{s_1,t_1,s_1^{-1},t_1^{-1}\}, \dots, \{s_c,t_c,s_c^{-1},t_c^{-1}\},
\{u_d,v_d,u_d^{-1},v_d^{-1}\}, \dots, \{u_d,v_d,u_d^{-1},v_d^{-1}\}$$
are $|H|$ pairwise disjoint inverse-closed right transversals of $H$ in $HxH\cup Hx^{-1}H$.
\end{proof}



\begin{lemma}\label{lem10}
Let $G$ be a group, let $H$ be a nontrivial subgroup of $G$, and let $x\in G$. Suppose that $HxH\neq Hx^{-1}H$ and $|H|$ is a prime. Then there exist $|H|$ pairwise disjoint inverse-closed right transversals of $H$ in $HxH\cup Hx^{-1}H$. In particular, for any integer $0\leqslant b\leqslant |H|$, there exist $b$ pairwise disjoint right transversals of $H$ in $HxH\cup Hx^{-1}H$ whose union is inverse-closed.
\end{lemma}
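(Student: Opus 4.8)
The plan is to mirror the proof of Lemma~\ref{lem9}, replacing the hypothesis on $|HxH|/|H|$ by the primality of $|H|$. Writing $p=|H|$ and $m=|HxH|/|H|=|H|/|H\cap H^x|$, I first observe that since $H\cap H^x$ is a subgroup of $H$ and $p$ is prime, either $|H\cap H^x|=p$ (so $m=1$) or $|H\cap H^x|=1$ (so $m=p$). When $m=1$ each of $HxH$ and $Hx^{-1}H$ is a single right coset, and exactly as in the first case of Lemma~\ref{lem9} the pairs $\{r,r^{-1}\}$ with $r\in HxH$ furnish $p$ pairwise disjoint inverse-closed right transversals. So the real content is the case $m=p$, which I treat below.

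Assume $m=p$, so that $|H\cap H^x|=1$. Let $Hx_1,\dots,Hx_p$ be the right cosets of $H$ contained in $HxH$ and $Hy_1,\dots,Hy_p$ those contained in $Hx^{-1}H$; since $HxH\cap Hx^{-1}H=\emptyset$, these $2p$ cosets are distinct. The first key step is to establish a grid structure on the elements: I claim that for each pair $(i,j)$ there is exactly one element $g_{ij}\in Hx_i$ with $g_{ij}^{-1}\in Hy_j$. As $g^{-1}\in Hy_j$ means $g\in y_j^{-1}H$, this amounts to showing $|Hx_i\cap y_j^{-1}H|=1$, where both $Hx_i$ and $y_j^{-1}H=(Hy_j)^{-1}$ lie in $HxH$. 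A routine computation shows that a nonempty intersection of the right coset $Hx_i$ with the left coset $y_j^{-1}H$ has cardinality $|H\cap cHc^{-1}|$ for any $c$ in it, and that $|H\cap cHc^{-1}|=|H\cap H^x|=1$ for every $c\in HxH$; hence each such intersection has size $0$ or $1$. Summing over $j$ and using that the left cosets $y_1^{-1}H,\dots,y_p^{-1}H$ partition $(Hx^{-1}H)^{-1}=HxH$, I get $\sum_{j}|Hx_i\cap y_j^{-1}H|=|Hx_i|=p$; being a sum of $p$ terms each equal to $0$ or $1$, every term equals $1$, which proves the claim and shows simultaneously that $Hx_i=\{g_{i1},\dots,g_{ip}\}$ and $Hy_j=\{g_{1j}^{-1},\dots,g_{pj}^{-1}\}$.

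With the grid in hand, I build the transversals by translation. Indexing the cosets by $\ZZZ_p$, for each $k\in\ZZZ_p$ I set
\[
T_k=\{g_{i,\,i+k}\mid i\in\ZZZ_p\}\cup\{g_{i,\,i+k}^{-1}\mid i\in\ZZZ_p\},
\]
with subscripts read modulo $p$. Each $T_k$ is inverse-closed by construction, and it is a right transversal of $H$ in $HxH\cup Hx^{-1}H$ because it contains exactly one element $g_{i,i+k}$ of each $Hx_i$ and, since $g_{i,i+k}^{-1}\in Hy_{i+k}$ and $i\mapsto i+k$ is a bijection of $\ZZZ_p$, exactly one element of each $Hy_j$. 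For distinct $k,k'$ the sets $T_k$ and $T_{k'}$ are disjoint: inside each $Hx_i$ they select the distinct elements $g_{i,i+k}\neq g_{i,i+k'}$, and inside each $Hy_j$ they select $g_{j-k,\,j}^{-1}$ and $g_{j-k',\,j}^{-1}$, which differ because $j-k\neq j-k'$. Thus $T_0,\dots,T_{p-1}$ are $p=|H|$ pairwise disjoint inverse-closed right transversals.

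For the ``in particular'' assertion, given $0\leqslant b\leqslant|H|$ I take the union of any $b$ of these transversals; it is a union of $b$ pairwise disjoint right transversals and is inverse-closed, being a union of inverse-closed sets. The main obstacle is the grid step: one must verify that primality forces $|H\cap H^x|\in\{1,p\}$ and, in the non-degenerate case, that each right coset in $HxH$ meets the inverse of each right coset in $Hx^{-1}H$ in exactly one point. Once this single-element grid is available, the translations-mod-$p$ construction is clean, and primality plays no further role.
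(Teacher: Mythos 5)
Your proof is correct and follows essentially the same route as the paper: reduce to $m\in\{1,p\}$ via primality, handle $m=1$ as in Lemma~\ref{lem9}, and for $m=p$ decompose the complete bipartite incidence structure between the cosets in $HxH$ and those in $Hx^{-1}H$ into $p$ perfect matchings, each of which yields an inverse-closed transversal. The only difference is one of packaging: the paper obtains the $K_{p,p}$ structure from Lemma~\ref{lem5} and the matchings from K\"onig's $1$-factorization theorem, whereas you re-derive the structure by a direct double-coset count and exhibit the $1$-factorization explicitly via the cyclic shifts $j=i+k \pmod p$.
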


\begin{proof}
Since $|HxH|/|H|=|H|/|H\cap H^x|$ and $|H|$ is a prime, we have $|HxH|/|H|=1$ or $|H|$. If $|HxH|/|H|=1$, then the conclusion follows from Lemma~\ref{lem9}. Now assume that $|HxH|/|H|=|H|$. Then Lemma~\ref{lem5} asserts that $\Gamma_x$ is a complete bipartite regular graph of valency $|H|$. By K\"{o}nig's $1$-factorization theorem \cite{Konig1976} (see also \cite[Corollary 16.6]{BM}), the $|H|$-regular bipartite graph $\Gamma_x$ can be decomposed into $|H|$ edge-disjoint perfect matchings. These $|H|$ perfect matchings induce $|H|$ pairwise disjoint right transversals of $H$ in $HxH\cup Hx^{-1}H$. Moreover, by the definition of $\Gamma_x$, each of these right transversals is inverse-closed. This completes the proof.
\end{proof}


Next we consider the case $HxH=Hx^{-1}H$.

\begin{lemma}\label{lem11}
Let $G$ be a group, let $H$ be a nontrivial subgroup of $G$, and let $x\in G$. Suppose that $HxH=Hx^{-1}H=Hx$ and there is an involution in $Hx$. Then for any integer $0\leqslant b\leqslant |H|$, there exist $b$ pairwise disjoint right transversals of $H$ in $HxH$ whose union is inverse-closed.
\end{lemma}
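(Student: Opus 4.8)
The plan is to reduce the statement to an elementary counting problem inside the single coset $Hx$. The first step is to note that the hypothesis $HxH=Hx$ forces $|HxH|/|H|=1$, and since $|HxH|/|H|=|H|/|H\cap H^x|$ this gives $H\cap H^x=H$, i.e. $H^x=H$, so that $x$ (and hence $x^{-1}$) normalizes $H$. The second step is to record that $Hx$ is inverse-closed as a set: from $HxH=Hx^{-1}H=Hx$ we get $x^{-1}\in Hx$, whence $(Hx)^{-1}=x^{-1}H=Hx^{-1}=Hx$, the middle equality using that $x^{-1}$ normalizes $H$. (Equivalently, if $y\in Hx$ is the given involution, then $Hx=Hy$ and $(Hy)^{-1}=yH=Hy$, since $y$, being a product of an element of $H$ with $x$, normalizes $H$.)

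Because $HxH=Hx$ is a single right coset of $H$, a right transversal of $H$ in $HxH$ is simply a single element of $Hx$, and $b$ pairwise disjoint such transversals amount to a set of $b$ distinct elements of $Hx$. Thus the assertion is equivalent to the following: for every integer $0\leqslant b\leqslant|H|$ there is an inverse-closed subset of $Hx$ of size exactly $b$. Since $Hx$ is inverse-closed with $|Hx|=|H|$, the map $z\mapsto z^{-1}$ permutes $Hx$ and equals its own inverse; its fixed points are precisely the involutions in $Hx$. Let $f$ denote the number of these involutions and let $p$ be the number of two-element orbits $\{z,z^{-1}\}$, so that $|H|=f+2p$ and in particular $f\equiv|H|\pmod 2$. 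The hypothesis that $Hx$ contains an involution gives $f\geqslant 1$.

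It then remains to realize each $b\in\{0,1,\dots,|H|\}$ in the form $b=j+2k$ with $0\leqslant j\leqslant f$ and $0\leqslant k\leqslant p$, and to select $j$ of the involutions together with $k$ of the inverse-pairs; any such choice produces an inverse-closed subset of the required size. For even $b$ I would take $j=0$, $k=b/2$ when $b\leqslant 2p$, and $j=b-2p$, $k=p$ otherwise; for odd $b$ (which requires at least one fixed point, available because $f\geqslant 1$) I would take $j=1$, $k=(b-1)/2$ when $b\leqslant 2p+1$, and $j=b-2p$, $k=p$ otherwise. In every case the parity of $j$ agrees with that of $b$ since $2p=|H|-f$ is even, and the inequalities $0\leqslant j\leqslant f$ and $0\leqslant k\leqslant p$ follow from $0\leqslant b\leqslant|H|$. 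I expect the only genuine obstacle to be the conceptual first paragraph, namely establishing that $Hx$ is inverse-closed; the parity bookkeeping for odd $b$, where $f\geqslant 1$ is exactly what guarantees an attainable odd value of $j$, is the one point of the counting argument that must be checked with care, while the remaining verifications are routine.
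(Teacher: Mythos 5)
Your proposal is correct and follows essentially the same route as the paper's proof: both reduce the problem to choosing an inverse-closed subset of $Hx$ of size exactly $b$, and both realize $b$ as (number of chosen involutions) plus twice (number of chosen inverse-pairs), using the hypothesis that $Hx$ contains an involution to handle odd $b$. Your explicit verification that $Hx$ is inverse-closed (which the paper simply asserts) is a welcome addition, though it can be shortened: $(Hx)^{-1}=x^{-1}H\subseteq Hx^{-1}H=Hx$, and equality follows by comparing cardinalities.
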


\begin{proof}
In this case, every single element in $Hx$ gives a right transversal of $H$ in $HxH\cup Hx^{-1}H=Hx$. Since $Hx$ is inverse-closed and contains at least one involution, we can write
\[
Hx=\{r_1,\dots,r_c,s_1,s^{-1}_1,\dots,s_d,s^{-1}_d\}
\]
for some integers $c, d$ with $1\leqslant c\leqslant|H|$ and $c+2d=|H|$, where $r_i$ is an involution for $i=1,\dots,c$ and $s_j$ has order greater than $2$ for $j=1,\dots,d$. If $b>2d$, then $0<b-2d\leqslant|H|-2d=c$, and we take
\[
R=\{r_1,\dots,r_{b-2d},s_1,s^{-1}_1,\dots,s_d,s^{-1}_d\}.
\]
If $b\leqslant2d$ and $b$ is odd, then we take
\[
R=\{r_1,s_1,s^{-1}_1,\dots,s_{(b-1)/2},s^{-1}_{(b-1)/2}\}.
\]
If $b\leqslant2d$ and $b$ is even, then we take
\[
R=\{s_1,s^{-1}_1,\dots,s_{b/2},s^{-1}_{b/2}\}.
\]
In each case $R$ consists of $b$ pairwise disjoint transversals of $H$ in $HxH\cup Hx^{-1}H$, and moreover $R$ is inverse-closed.
\end{proof}

\begin{lemma}\label{lem12}
Let $G$ be a group, let $H$ be a nontrivial subgroup of $G$, and let $x\in G$. Suppose that $HxH=Hx^{-1}H$ and $|HxH|/|H|=2$. Then there exist $|H|$ pairwise disjoint inverse-closed right transversals of $H$ in $HxH$. In particular, for any integer $0\leqslant b\leqslant |H|$, there exist $b$ pairwise disjoint right transversals of $H$ in $HxH$ whose union is inverse-closed.
\end{lemma}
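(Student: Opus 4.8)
The plan is to study the inversion map $g\mapsto g^{-1}$ directly on the set $HxH$. Since $|HxH|/|H|=2$, I would write $HxH=C_1\cup C_2$ as the disjoint union of its two right cosets of $H$, and observe that this set is inverse-closed because $HxH=Hx^{-1}H$. Thus inversion is an involutory permutation of $HxH$ whose orbits are the involutions contained in $HxH$ (its fixed points) together with the two-element pairs $\{g,g^{-1}\}$ with $g^2\neq e$. I would classify each orbit by how it meets the two cosets, calling a pair \emph{crossing} if it has one element in each of $C_1,C_2$ and \emph{internal} if it lies wholly inside a single $C_i$, and recording in which $C_i$ each involution lies. Because an inverse-closed subset of $HxH$ is precisely a union of such orbits, and because a family of $b$ pairwise disjoint right transversals of $H$ in $HxH$ is the same thing as a subset of $HxH$ meeting each of $C_1$ and $C_2$ in exactly $b$ elements, the whole problem reduces to the following: for every integer $b$ with $0\leqslant b\leqslant|H|$, construct an inverse-closed $U\subseteq HxH$ with $|U\cap C_1|=|U\cap C_2|=b$. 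The case $b=|H|$ takes $U=HxH$ and partitions it into the required $|H|$ pairwise disjoint transversals.

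The crucial structural input, which I expect to be the main obstacle, is that $HxH$ contains at least one crossing pair. I would prove this by contradiction: if there were no crossing pair, then every element of $C_1$ would have its inverse in $C_1$, so inversion would map $C_1$ onto itself and hence $x^{-1}H=(Hx)^{-1}=Hx$. Applying this equality to the identity gives $x^{-1}\in Hx$, so $x^{-2}\in H$ and therefore $x^2\in H$; feeding $x^2\in H$ back into $x^{-1}H=Hx$ then yields $x^{-1}Hx\subseteq H$, i.e.\ $H^x=H$, so that $x$ normalises $H$ and $|HxH|/|H|=1$. This contradicts the hypothesis $|HxH|/|H|=2$, and so a crossing pair must exist.

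It remains to realise every target $b$, which I would do by a short counting argument. Comparing $|C_1|=|C_2|=|H|$ shows that the number of involutions in $C_1$ plus twice the number of internal pairs inside $C_1$ equals the corresponding quantity for $C_2$; consequently the internal pairs and the surplus involutions on the two sides can be matched into blocks each contributing $(2,2)$ to $(|U\cap C_1|,|U\cap C_2|)$, while a crossing pair and a matched pair of involutions (one drawn from each side) each contribute $(1,1)$. These blocks partition $HxH$, so the first coordinates of all blocks sum to $|H|$; and since at least one $(1,1)$-block (a crossing pair) is available, every $b$ between $0$ and $|H|$ can be written as $j+2k$ for an admissible number $j\geqslant 0$ of chosen $(1,1)$-blocks and $k\geqslant 0$ of chosen $(2,2)$-blocks, the single unit of odd parity coming from the crossing block. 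The resulting $U$ is a union of inversion-orbits, hence inverse-closed, and pairing its elements of $C_1$ arbitrarily with those of $C_2$ exhibits it as a union of $b$ pairwise disjoint right transversals, which is exactly the form in which the conclusion is used through Lemma~\ref{lem4}.
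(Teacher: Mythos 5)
Your proposal is correct and follows essentially the same route as the paper: your crossing pairs, internal pairs and involutions are exactly the paper's sets $S\cup T$, $X\cup Y$ and $U\cup V$, the identity $|U\cap C_1|+2\,|\{\text{internal pairs in }C_1\}|=|U\cap C_2|+2\,|\{\text{internal pairs in }C_2\}|$ is the paper's $|U|+|X|=|V|+|Y|$, and your block-selection argument realises every $b$ in the same way the paper's explicit six-case construction of $R$ does. The only genuine difference is that you prove the existence of a crossing pair from scratch (no crossing pair would force $x^2\in H$ and $H^x=H$, hence $|HxH|/|H|=1$), whereas the paper deduces $|S|\geqslant 1$ from Lemma~\ref{lem5}; both are valid, and note that, like the paper's own proof, your argument establishes the ``in particular'' clause (which is all that is used later) rather than the stronger first sentence of the lemma.
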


\begin{proof}
Since $|HxH|/|H|=2$, we have $HxH=Hx\cup Hy$ for some $y\in G$. Take
\begin{align*}
S=\{z\mid z\in Hx,\,z^{-1}\in Hy\},\quad &T=\{z\mid z\in Hy,\,z^{-1}\in Hx\},\\
U=\{z\mid z\in Hx,\,z^2=e\},\quad &V=\{z\mid z\in Hy,\,z^2=e\},\\
X=\{z\mid z\in Hx,\,z^{-1}\in Hx,\,z^2\ne e\},\quad &Y=\{z\mid z\in Hy,\,z^{-1}\in Hy,\,z^2\ne e\}.
\end{align*}
By Lemma~\ref{lem5}\cha{}{,} we have $|S|\geqslant 1$. It is clear that $S\cup U\cup X=Hx$ and $T\cup V\cup Y=Hy$. Since $S^{-1}=T$, we have $|S|=|T|$ and so $|U|+|X|=|V|+|Y|$. Hence we can write
\begin{align*}
X=\{x_1,x_1^{-1},\dots,x_k,x_k^{-1}\},\quad &Y=\{y_1,y_1^{-1},\dots,y_\ell,y_\ell^{-1}\},\\
U=\{x_{k+1},x_{k+2},\dots,x_{k+u}\},\quad &V=\{y_{\ell+1},y_{\ell+2},\dots,y_{\ell+v}\},\\
S=\{x_{k+u+1},x_{k+u+2},\dots,x_{k+u+s}\},\quad &T=\{x_{k+u+1}^{-1},x_{k+u+2}^{-1},\dots,x_{k+u+s}^{-1}\},
\end{align*}
where $2k+u+s=2\ell+v+s=|H|$.
Without loss of generality we may assume that $k\leqslant\ell$.
If $b\leqslant 2k+1$ and $b$ is odd, then we take
\[
R=\{x_{1},x_{1}^{-1},\dots,x_{(b-1)/2},x_{(b-1)/2}^{-1}\}\cup \{y_{1},y_{1}^{-1},\dots,y_{(b-1)/2},y_{(b-1)/2}^{-1}\}\cup \{x_{k+u+1},x_{k+u+1}^{-1}\}.
\]
If $b\leqslant 2k+1$ and $b$ is even, then we take
\[
R=\{x_{1},x_{1}^{-1},\dots,x_{b/2},x_{b/2}^{-1}\}\cup \{y_{1},y_{1}^{-1},\dots,y_{b/2},y_{b/2}^{-1}\}.
\]
If $2k+1<b\leqslant2\ell+1$ and $b$ is odd, then we take
\[
R=X\cup\{x_{k+1},x_{k+2},\dots,x_{k+(b-1-2k)}\}\cup\{y_1,y_1^{-1},\dots,y_{(b-1)/2},y_{(b-1)/2}^{-1}\}\cup\{x_{k+u+1},x_{k+u+1}^{-1}\}.
\]
If $2k+1<b\leqslant2\ell+1$ and $b$ is even, then we take
\[
R=X\cup\{x_{k+1},x_{k+2},\dots,x_{k+(b-2k)}\}\cup\{y_1,y_1^{-1},\dots,y_{b/2},y_{b/2}^{-1}\}.
\]
If $2\ell+1<b\leqslant2\ell+v$, then we take
\[
R=X\cup Y\cup\{x_{k+1},x_{k+2},\dots,x_{k+(b-2k)}\}\cup\{y_{\ell+1},y_{\ell+2},\dots,y_{\ell+(b-2\ell)}\}.
\]
If $2\ell+v<b\leqslant|H|$, then we take
\[
R=X\cup Y\cup U\cup V\cup \{x_{k+u+1},x_{k+u+2},\dots,x_{b-2k-u}\}\cup \{x_{k+u+1}^{-1},x_{k+u+2}^{-1},\dots,x_{b-2k-u}^{-1}\}.
\]
In each case $R$ consists of $b$ pairwise disjoint transversals of $H$ in $HxH\cup Hx^{-1}H$, and moreover $R$ is inverse-closed.
\end{proof}

\begin{lemma}\label{lem13}
Let $G$ be a group, let $H$ be a subgroup of $G$ with $|H|=m$ odd, and let $x\in G\setminus H$. Suppose that $HxH=Hx^{-1}H$, $|HxH|/|H|=m$, and $HxH$ contains an involution. Then there exist $|H|$ pairwise disjoint inverse-closed right transversals of $H$ in $HxH$. In particular, for any integer $0\leqslant b\leqslant |H|$, there exist $b$ pairwise disjoint right transversals of $H$ in $HxH$ whose union is inverse-closed.
\end{lemma}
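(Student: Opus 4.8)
The plan is to pin down the exact behaviour of $HxH$ under inversion and then reduce the construction to a classical edge-colouring of a complete graph. First I would record that the hypothesis $|HxH|/|H|=m=|H|$ forces $|H\cap H^x|=1$, since $|HxH|/|H|=[H:H\cap H^x]$. Consequently the map $H\times H\to HxH$, $(h_1,h_2)\mapsto h_1xh_2$, is a bijection: if $h_1xh_2=h_1'xh_2'$ then $h_2(h_2')^{-1}\in H\cap H^x=\{e\}$. Thus $|HxH|=m^2$, every element of $HxH$ has unique coordinates, and the right coset of $h_1xh_2$ is $Hxh_2$, so the $m$ cosets are $C_{h_2}=Hxh_2$ with $h_2\in H$ serving as the coset label. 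By Lemma~\ref{lem5}, $\Gamma_x=K_m$.

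Next I would compute inversion in these coordinates. Writing $x^{-1}=axb$ with $a,b\in H$ (possible since $x^{-1}\in Hx^{-1}H=HxH$) and inverting, uniqueness of the parametrisation forces $a=b$, i.e.\ $x^{-1}=axa$. Hence $(h_1xh_2)^{-1}=(h_2^{-1}a)\,x\,(ah_1^{-1})$, so inversion acts on coordinates by $(h_1,h_2)\mapsto(h_2^{-1}a,\,ah_1^{-1})$. Three structural facts then fall out, and these are the crux of the argument: (i) the fixed points of inversion are exactly the elements $(h_2^{-1}a,h_2)$, one in each coset $C_{h_2}$, so every coset contains exactly one involution, which realises the hypothesis that $HxH$ contains an involution and refines Lemma~\ref{lem2}; (ii) a non-involution $(h_1,h_2)$ and its inverse lie in the same coset iff $h_2=ah_1^{-1}$, which is precisely the involution condition, so no non-involution has its inverse in its own coset; and (iii) for fixed $h_2$, as $h_1$ runs over $H$ the target coset $C_{ah_1^{-1}}$ of the inverse runs over all $m$ cosets bijectively, whence between any two distinct cosets there is exactly one inverse pair $\{z,z^{-1}\}$ with $z$ in one coset and $z^{-1}$ in the other.

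With this in hand the problem becomes purely combinatorial. I would model $HxH$ on the complete graph $K_m$ whose vertices are the $m$ cosets: by (iii) each edge $\{C_i,C_j\}$ carries the unique inverse pair crossing $C_i$ and $C_j$ (one element in each coset), and by (i) each vertex carries its unique involution. An inverse-closed right transversal is then exactly a choice, for every vertex, of either its involution or one endpoint of an incident edge, subject to inverse-closedness; since choosing one endpoint of an edge forces choosing the other, such a transversal corresponds to a matching $M$ of $K_m$ together with the involutions at the vertices left uncovered by $M$ (by (ii) these are the only two options, and since $m$ is odd at least one vertex is uncovered). To obtain $m$ pairwise disjoint transversals exhausting all $m^2$ elements I need the edges of $K_m$ partitioned into $m$ matchings so that each vertex is uncovered exactly once. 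For $m$ odd this is supplied by a proper $m$-edge-colouring of $K_m$, obtained concretely by deleting one vertex from a $1$-factorisation of $K_{m+1}$ (the round-robin schedule): every colour class is a near-perfect matching missing exactly one vertex, and each vertex is missed by exactly one colour. Taking the $k$-th transversal to consist of the inverse pairs on the edges of colour $k$ together with the involution of the unique vertex that colour $k$ misses yields $|H|=m$ pairwise disjoint inverse-closed right transversals of $H$ in $HxH$. The final ``in particular'' clause is then immediate: the union of any $b$ of these $m$ transversals is inverse-closed and consists of $b$ pairwise disjoint transversals.

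The main obstacle is step (iii), namely verifying that between every pair of distinct cosets there is exactly one inverse pair (together with the companion facts that each coset has exactly one involution and that no non-involution has its inverse in its own coset). This uniformity is precisely what permits the reduction to an honest edge-colouring of $K_m$, rather than to a weighted or multigraph variant, and it is exactly where the hypothesis $|H\cap H^x|=1$ is used in full strength through the coordinate bijection and the identity $x^{-1}=axa$. Once this regular structure is secured, the remaining combinatorics is the standard odd-order edge-colouring and is routine.
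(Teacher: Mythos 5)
Your proof is correct and follows essentially the same route as the paper's: both arguments establish that each right coset in $HxH$ contains exactly one involution and that each pair of distinct cosets carries exactly one inverse pair, and then assemble the $m$ transversals from a near-one-factorization of $K_m$ (the paper's cyclic indexing $\{Hx_{i+j},Hx_{i-j}\}$ is precisely the round-robin edge-colouring you invoke). Your coordinatization via $x^{-1}=axa$ is a more explicit way of securing these structural facts --- and incidentally shows the involution hypothesis is automatic in this case --- but the substance of the argument is the same.
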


\begin{proof}
Since $|HxH|/|H|=m$, Lemma~\ref{lem5} implies that $\Gamma_x=K_m$. Write
\[
HxH=Hx_0\cup Hx_1\cup\dots\cup Hx_{m-1}.
\]
Consider $Hx_i$ for a fixed $i\in\{0,1,\dots,m-1\}$. Since $\Gamma_x=K_m$, for each $j\in\{0,1,\dots,m-1\}\setminus\{i\}$, there exists an element in $Hx_i$ whose inverse is in $Hx_j$.
Hence there is at most one involution in $Hx_i$ as $|Hx_i|=m$.
Moreover, since there is an involution in $HxH$, Lemma~\ref{lem2} implies that there is at least one involution in $Hx_i$.
Thus there is exactly one involution in $Hx_i$, which we denote by $x_{i,0}$.
Let $n=(m-1)/2$.
For $j=1,\dots,n$, since $\Gamma_x=K_m$, there exists $x_{i,j}\in Hx_{i+j}$ such that $x_{i,j}^{-1}\in Hx_{i-j}$, where the subscripts of $x_{i+j}$ and $x_{i-j}$ are taken modulo $m$.
Then
\[
R_i:=\{x_{i,0},x_{i,1},x_{i,1}^{-1},\dots,x_{i,n},x_{i,n}^{-1}\}
\]
is an inverse-closed right transversal of $H$ in $HxH$.
Clearly, $R_0,\dots,R_{m-1}$ are pairwise disjoint, and they form $|H|$ pairwise disjoint inverse-closed right transversals as desired.
\end{proof}

%

\section{Proofs of Theorems \ref{thm4} and \ref{thm1}}
\label{sec3}

\begin{proof}[Proof of Theorem~$\ref{thm4}$]
We only prove (a) as the proof of (b) is similar. Let
\[
G=\langle A,y\mid y^2=e,\,x^y=x^{-1}\text{ for all }x\in A\rangle
\]
be a generalized dihedral group, where $A$ is an abelian subgroup of $G$ and $x^y = y^{-1}xy$. Let $H$ be a nontrivial subgroup of $G$. The ``if" part of the statement in (a) is clearly true as a perfect code of $G$ is simply a $(0, 1)$-regular set of $G$. Now we prove the ``only if" part of the statement. Assume that $H$ is a perfect code of $G$. Let $a$ and $b$ be integers with $0\leqslant a\leqslant|H|-1$, $0\leqslant b\leqslant |H|$, and $a$ even when $|H|$ is odd. By Lemma~\ref{lem4}, to prove that $H$ is an $(a,b)$-regular set of $G$, we only need to take an inverse-closed subset of $H\setminus\{e\}$ with size $a$ and $b$ pairwise disjoint right transversals of $H$ in $G \setminus H$ which are inverse-closed in each $HxH\cup Hx^{-1}H$ for $x\in G$.

Case~1: $H\leq A$. Then $H^y=H^{-1}=H$. It follows that $H$ is normal in $G$, and so $|HzH|/|H|=|H|/|H^z\cap H|=1$ for any $z\in G$. If $HzH\neq Hz^{-1}H$, then Lemmas~\ref{lem9} asserts that there exist $b$ pairwise disjoint right transversals of $H$ in $HzH\cup Hz^{-1}H$ whose union is inverse-closed. 
Otherwise, since $H$ is a perfect code of $G$, by Theorem~\ref{thm2}, for each $x \in G\setminus H$ such that $HxH=Hx^{-1}H$ and $|H|/|H\cap H^{x}|$ is odd, there exists an involution $y'\in Hx$. This together with Lemma~\ref{lem11} implies that there exist $b$ pairwise disjoint right transversals of $H$ in $HzH\cup Hz^{-1}H$ whose union is inverse-closed.
Thus, we conclude that $H$ is an $(a,b)$-regular set of $G$.

Case~2: $H\nleq A$. Then $H$ contains an element $g\notin A $. Hence we can suppose $g=x'y$ with $x'\in A$. It follows that $g^2=x'yx'y=x'yy(y^{-1}x'y)=x'(yy)x'^y=x'x'^{-1}=e$. That is, $g$ is an involution. Thus $G=\langle A,g\rangle$, and so $H=\langle H\cap A,g\rangle$. In particular, $H=(H\cap A)\cup (H\cap A)g$ and $|H|/|H\cap A|=2$.
It is clear that $H\cap A\leq H\cap H^{z}$ for any $z\in G$.
So we have $|HzH|/|H|=|H|/|H\cap H^{z}|=1$ or $2$. Since $H$ is a perfect code of $G$, by Theorem~\ref{thm2}, for each $x\in G\setminus H$ such that $HxH=Hx^{-1}H$ and $|H|/|H\cap H^x|$ is odd, there exists an involution in $Hx$. Hence, by Lemmas~\ref{lem9}, \ref{lem11} and \ref{lem12}, $H$ is an $(a,b)$-regular set of $G$.
\end{proof}

%

\begin{lemma}\label{lem8}
Let $G$ be a group of order $4p$, where $p$ is a prime, let $H$ be a nontrivial subgroup of $G$, and let $x\in G$. If $HxH\neq Hx^{-1}H$, then for any integer $0\leqslant b\leqslant |H|$, there exist $b$ pairwise disjoint right transversals of $H$ in $HxH\cup Hx^{-1}H$ whose union is inverse-closed.
\end{lemma}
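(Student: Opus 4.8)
The plan is to reduce everything to the already-established Lemmas~\ref{lem9} and~\ref{lem10} by a short arithmetic analysis of the two parameters $|H|$ and $m:=|HxH|/|H|=|H|/|H\cap H^x|$. Since $H$ is nontrivial and the hypothesis $HxH\neq Hx^{-1}H$ forces $H\neq G$, the order $|H|$ is a proper nontrivial divisor of $4p$, so $|H|\in\{2,4,p,2p\}$. Moreover $m$ divides $|H|$, and because $HxH$ and $Hx^{-1}H$ are distinct, hence disjoint, double cosets lying inside $G$, I would record the key inequality
\[
2m|H|=|HxH\cup Hx^{-1}H|\leqslant|G|=4p,
\]
so that $m|H|\leqslant 2p$.

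First I would dispose of the cases covered directly by the earlier lemmas. If $|H|\in\{2,p\}$, then $|H|$ is prime and Lemma~\ref{lem10} gives the conclusion at once. If $|H|=2p$, the inequality $m|H|\leqslant 2p$ collapses to $m=1$, so Lemma~\ref{lem9} applies. This leaves only $|H|=4$, where $m\in\{1,2,4\}$; the subcases $m\in\{1,2\}$ are again immediate from Lemma~\ref{lem9}.

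The single genuinely new situation is therefore $|H|=4$ with $m=4$, which by the inequality can only occur when $p\geqslant 11$. Here Lemma~\ref{lem5} gives $\Gamma_x\cong K_{4,4}$, a $4$-regular bipartite graph, and I would run exactly the K\"{o}nig-type argument from the proof of Lemma~\ref{lem10}: by K\"{o}nig's $1$-factorization theorem $\Gamma_x$ decomposes into $4$ edge-disjoint perfect matchings, and under the correspondence in which each edge of $\Gamma_x$ encodes a pair of mutually inverse elements, these matchings induce $4=|H|$ pairwise disjoint inverse-closed right transversals of $H$ in $HxH\cup Hx^{-1}H$. Selecting any $b$ of them, and using that a union of inverse-closed sets is inverse-closed, then yields the ``in particular'' clause.

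The main obstacle is conceptual rather than computational: one must notice that the only configuration escaping Lemmas~\ref{lem9} and~\ref{lem10} is $|H|=4$ with $m=4$. Although $4$ is not prime, in this case $m=|H|$, so $\Gamma_x$ is a \emph{balanced} complete bipartite graph, which is precisely the hypothesis under which the K\"{o}nig $1$-factorization argument of Lemma~\ref{lem10} produces a full set of $|H|$ pairwise disjoint transversals. Thus no new graph-theoretic input is needed; the real work is the arithmetic, namely checking that $m\mid|H|\mid 4p$ together with $2m|H|\leqslant 4p$ leaves no possibility other than $m\leqslant 2$ (handled by Lemma~\ref{lem9}), $|H|$ prime (handled by Lemma~\ref{lem10}), and this single case $|H|=m=4$.
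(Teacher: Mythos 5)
Your proof is correct. It has the same skeleton as the paper's argument --- dispose of most cases by Lemmas~\ref{lem9} and~\ref{lem10} and finish the balanced case $\Gamma_x\cong K_{|H|,|H|}$ via K\"{o}nig's $1$-factorization --- but you add one ingredient the paper does not use: the global counting bound $2m|H|=|HxH\cup Hx^{-1}H|\leqslant|G|=4p$, where $m=|HxH|/|H|$. This bound forces $m=1$ when $|H|=2p$ and rules out the configuration $|H|=2r$, $m=r$ except when it already falls under Lemma~\ref{lem9}, so the only surviving new case is $|H|=m=4$. The paper instead treats the unbalanced sub-case $m=r$ with $|H|=2r$ head-on: it extracts $r$ pairwise disjoint inverse-closed transversals from the $r$ perfect matchings of $K_{r,r}$ and then observes that the complement $M$ of their union inside $HxH\cup Hx^{-1}H$ meets every right coset in exactly $r$ elements and is inverse-closed, so $M$ supplies the remaining $r$ transversals needed to reach $|H|=2r$. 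Your arithmetic shows that this sub-case is either subsumed by Lemma~\ref{lem9} (when $r=2$) or vacuous (when $r=p$), so the complementation device can be dispensed with entirely; the trade-off is that the paper's version of that step is purely local to the double coset and would work without invoking $|G|=4p$ there, whereas yours leans on the global order of $G$. Both routes are valid; yours is the leaner one for this lemma.
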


\begin{proof}
Since $|G|=4p$ with $p$ prime, we have $|H|=r$ or $2r$ with $r\in\{2,p\}$. If $|H|=r$, then Lemma~\ref{lem10} yields the desired result.
Thus assume that $|H|=2r$ in the rest of the proof. Then $|HxH|/|H|=|H|/|H\cap H^x|=1,r$ or $2r$. If $|HxH|/|H|=1$ or $2$, then Lemma~\ref{lem9} yields the desired result.

Next assume that $|HxH|/|H|=r$. Then by Lemma~\ref{lem5} we have $\Gamma_x=K_{r,r}$. By K\"{o}nig's 1-factorization theorem \cite{Konig1976}, $\Gamma_x$ can be decomposed into $r$ edge-disjoint perfect matchings. These matchings induce $r$ pairwise disjoint inverse-closed right transversals of $H$ in $HxH\cup Hx^{-1}H$, denoted by $R_1,R_2,\dots,R_r$, where $R_i^{-1}=R_i$ and $R_i\cap R_j=\emptyset$ for all distinct $i,j\in\{1,2,\dots,r\}$. Write $HxH=\{Hx_1,Hx_2,\dots,Hx_r\},Hx^{-1}H=\{Hx_{r+1},Hx_{r+2},\dots,Hx_{2r}\}$ and
\[M=(HxH\cup Hx^{-1}H)\setminus (R_1\cup R_2\cup\dots\cup R_r).
\]
Then for each $i\in \{1,2,\dots,2r\}$, since $|Hx_i|=|H|=2r$ and
\begin{align*}
M\cap Hx_i&=\big((Hx_1\cup Hx_2\cup\dots\cup Hx_{2r})\setminus (R_1\cup R_2\cup\dots\cup R_r)\big)\cap (Hx_i)\\
&=\big((Hx_1\cup Hx_2\cup\dots\cup Hx_{2r})\cap (Hx_i)\big)\setminus \big((R_1\cup R_2\cup\dots\cup R_r)\cap Hx_i\big)\\
&=(Hx_i)\setminus \big((R_1\cup R_2\cup\dots\cup R_r)\cap Hx_i\big),
\end{align*}
we have
\[
|M\cap Hx_i|=|Hx_i|-|(R_1\cup R_2\cup\dots\cup R_r)\cap Hx_i|=2r-r=r.
\]
Hence $M$ is the union of $r$ pairwise disjoint right transversals of $H$ in $HxH\cup Hx^{-1}H$. Moreover, since both $HxH\cup Hx^{-1}H$ and $R_1\cup R_2\cup\dots\cup R_r$ are inverse-closed, $M$ is inverse-closed as well. If $0\leqslant b\leqslant r$, then we take
\[
R=R_1\cup R_2\cup\dots\cup R_b.
\]
If $r< b\leqslant 2r$, then we take
\[
R=M\cup R_1\cup R_2\cup\dots\cup R_{b-r}.
\]
Hence, for $0\leqslant b\leqslant 2r$, $R$ consists of $b$ pairwise disjoint transversals of $H$ in $HxH\cup Hx^{-1}H$, and moreover $R$ is inverse-closed.

Now assume that $|HxH|/|H|=2r$. Then $\Gamma_x=K_{2r,2r}$ by Lemma~\ref{lem5}. Again, by K\"{o}nig's 1-factorization theorem \cite{Konig1976}, $\Gamma_x$ can be decomposed into $2r$ edge-disjoint perfect matchings. These $2r$ perfect matchings give rise to $2r$ pairwise disjoint right transversals of $H$ in $HxH\cup Hx^{-1}H$, and each of these right transversals is inverse-closed.
\end{proof}

%
%

\begin{lemma}\label{lem14}
Let $G$ be a group of order $4p$, where $p$ is a prime, let $H$ be a nontrivial subgroup of $G$, and let $x\in G\setminus H$ be such that $HxH=Hx^{-1}H$. Suppose that $HxH$ contains an involution when $|HxH|/|H|$ is odd. Then for any integer $0\leqslant b\leqslant |H|$, there exist $b$ pairwise disjoint right transversals of $H$ in $HxH$ whose union is inverse-closed.
\end{lemma}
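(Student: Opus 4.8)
The plan is to run a case analysis on the index $m=|HxH|/|H|=|H|/|H\cap H^x|$, which divides $|H|$, and to reduce each case to one of Lemmas~\ref{lem11}, \ref{lem12}, \ref{lem13} wherever possible. First I would record the arithmetic constraints forced by $|G|=4p$: a proper nontrivial subgroup satisfies $|H|\in\{2,4,p,2p\}$, while $m$ divides $|H|$ and obeys $m|H|=|HxH|\leqslant|G|=4p$. Combining these two restrictions, a routine divisibility check shows that the only possibilities are $m\in\{1,2,3,4\}$, that $m=3$ can occur only when $|H|=3$, and that $m=4$ can occur only when $|H|=4$ (so that $|H\cap H^x|=1$).

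The three ``easy'' cases then fall to the earlier lemmas. When $m=1$ we have $HxH=Hx^{-1}H=Hx$; since $m$ is odd, the hypothesis supplies an involution in $Hx$, and Lemma~\ref{lem11} gives the conclusion directly, including the extraction of $b$ transversals. When $m=2$, Lemma~\ref{lem12} applies verbatim. When $m=3$ we have $|H|=3=m$ odd, and the hypothesis again supplies an involution in $HxH$, so Lemma~\ref{lem13} produces $|H|$ pairwise disjoint inverse-closed right transversals of $H$ in $HxH$. In each of these last two cases the $b$-statement is then immediate, since any sub-union of inverse-closed transversals is again inverse-closed, so one simply retains $b$ of them.

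The remaining case $m=4$ (so $|H|=4$ and $|H\cap H^x|=1$) is not covered by the previous lemmas and is where the real work lies. Here Lemma~\ref{lem5} gives $\Gamma_x=K_4$. The key observation I would establish is that, because $|H\cap H^x|=1$, for any two right cosets $Hx_i,Hx_j$ in $HxH$ the number of $z\in Hx_i$ with $z^{-1}\in Hx_j$ equals $|Hx_i\cap x_j^{-1}H|$, and the intersection of a right coset and a left coset inside $HxH$ has size $|H\cap H^x|=1$ whenever it is nonempty (and $0$ otherwise). Counting the four elements of a fixed coset $Hx_i$ then forces the outcome: since $\Gamma_x=K_4$ each of the three other cosets receives exactly one inverse, accounting for three elements, so precisely one remaining element of $Hx_i$ has its inverse in $Hx_i$ itself, and such a self-inverse element must be an involution. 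Thus each of the four cosets contains a unique involution $w_i$, even though no involution was assumed in the even-index case.

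Finally I would assemble the transversals. Decomposing $K_4$ into its three perfect matchings and selecting, for each matched pair $\{Hx_i,Hx_j\}$, the unique $z$ with $z\in Hx_i$ and $z^{-1}\in Hx_j$, produces three pairwise disjoint inverse-closed right transversals (disjointness and exhaustion follow because each off-diagonal element is the unique one associated with its coset pair); the four involutions $w_0,w_1,w_2,w_3$ then form a fourth inverse-closed transversal disjoint from the other three. These four transversals partition $HxH$, so for any $0\leqslant b\leqslant|H|=4$ one takes $b$ of them, and their union is inverse-closed. The main obstacle throughout is exactly this $(|H|,m)=(4,4)$ configuration: it is the only even index equal to $|H|$, so it escapes both Lemma~\ref{lem12} (which requires $m=2$) and Lemma~\ref{lem13} (which requires $|H|$ odd), and it has to be handled by the involution-counting and matching argument above.
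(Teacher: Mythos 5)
Your proposal is correct and follows essentially the same route as the paper: reduce via the divisibility constraints on $m=|HxH|/|H|$ to Lemmas~\ref{lem11}, \ref{lem12} and \ref{lem13}, and treat the residual configuration $|H|=4$, $m=4$ by hand using $\Gamma_x=K_4$. The one place where you go beyond the paper is worth noting: in that last case the paper's proof only exhibits the three inverse-closed transversals coming from the three perfect matchings of $K_4$ and stops, leaving the fourth transversal (needed for $b=4=|H|$) implicit; your counting argument --- that $|Hx_i\cap x_j^{-1}H|\in\{0,|H\cap H^x|\}=\{0,1\}$ forces exactly one inverse pair per coset pair and hence a unique involution in each coset, so the four involutions form the missing fourth transversal --- supplies that step explicitly (one could equally take the complement of the three transversals inside $HxH$, as the paper does in Lemma~\ref{lem8}). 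So your write-up is, if anything, more complete than the published argument.
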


\begin{proof}
If $|HxH/H|=1$ or $2$, then the result follows from Lemmas~\ref{lem11} and \ref{lem12}. Thus assume that $|HxH/H|\geqslant3$ in the rest of the proof. Then $|G|\neq8$, and so $p$ is odd. Hence $|H|=2$, $4$, $p$ or $2p$. If $|H|=2$, then $|HxH|/|H|=|H|/|H\cap H^x|\leqslant2$, a contradiction. If $|H|=2p$, then $|HxH|/|H|\leqslant|G|/|H|=2$, again a contradiction. If $|H|=p$, then the result follows from Lemmas~\ref{lem11} and \ref{lem13}.

It remains to consider the case $|H|=4$. Since $|HxH|/|H|\geqslant3$, we have $|HxH|/|H|=4$. So $\Gamma_x$ is the complete graph $K_4$. Consequently, there are three pairwise disjoint perfect matchings in $\Gamma_x$ which induce three pairwise disjoint inverse-closed transversals of $H$ in $HxH\cup Hx^{-1}H$. 
\end{proof}

\begin{proof}[Proof of Theorem~$\ref{thm1}$]
We only prove (a) as the proof of (b) is similar. Let $G$ be a group of order $4p$ or $pq$ for some primes $p$ and $q$, and let $H$ be a nontrivial subgroup of $G$. Since a perfect code of $G$ is exactly a $(0, 1)$-regular set of $G$, the ``if" part of the statement in (a) is clearly true.

Now we prove the ``only if" part of the statement in (a). Assume that $H$ is a perfect code of $G$. Let $a$ and $b$ be integers with $0\leqslant a\leqslant|H|-1$, $0\leqslant b\leqslant |H|$, and $a$ even when $|H|$ is odd. By Lemma~\ref{lem4}, to prove that $H$ is an $(a,b)$-regular set of $G$, it suffices to take an inverse-closed subset of $H\setminus\{e\}$ with size $a$ and $b$ pairwise disjoint right transversals of $H$ in $G \setminus H$ which are inverse-closed in each $HxH\cup Hx^{-1}H$ for $x\in G$.

Since $H$ is a perfect code of $G$, by Theorem~\ref{thm2}, for each $x\in G\setminus H$ such that $HxH=Hx^{-1}H$ and $|H|/|H\cap H^x|$ is odd, there exists an involution $y\in Hx$. Hence, if $|G|=4p$ for some prime $p$, then $H$ is an $(a,b)$-regular set of $G$ by Lemmas~\ref{lem8} and \ref{lem14}. The rest of the proof handles the case $|G|=pq$.

Case~1: Both $p$ and $q$ are odd primes. Suppose that there exists some $x\in G\setminus H$ such that $HxH=Hx^{-1}H$. Since $H$ is a perfect code of $G$, Lemma~\ref{thm2} implies that there exists some $z\in G\setminus H$ with $z^2\in H$ such that $Hz$ contains an involution, which contradicts the assumption that $|G| = pq$ is odd. Hence $HxH\neq Hx^{-1}H$ for any $x\in G\setminus H$. Since $|G|=pq$ with both $p$ and $q$ odd primes, we have $|H|=r$ for $r \in \{p, q\}$. Since $|HxH|/|H|=|H|/|H\cap H^x|$, it follows that $|HxH|/|H|=1$ or $r$. Thus, by Lemmas~\ref{lem9} and \ref{lem10}, $H$ is an $(a,b)$-regular set of $G$.

Case~2: One of $p$ and $q$ is even, say, $p=2$. Then $|H|=2$ or $q$. First assume that $HxH\neq Hx^{-1}H$.
If $|H|=2$, then $|HxH|/|H|=|H|/|H\cap H^x|\leqslant2$, and by Lemma~\ref{lem9}, $H$ is an $(a,b)$-regular set of $G$. If $|H|=q$, then by Lemma~\ref{lem10}, $H$ is an $(a,b)$-regular set of $G$. Next assume that $HxH=Hx^{-1}H$. If $|H|=2$, then $|HxH|/|H|=|H|/|H\cap H^x|\leqslant2$, and by Lemmas~\ref{lem11} and \ref{lem12}, $H$ is an $(a,b)$-regular set of $G$. If $|H|=q$, then $|HxH|/|H|=|H|/|H\cap H^x|$ is $1$ or $q$, and hence by Lemmas~\ref{lem11} and \ref{lem13}, $H$ is an $(a,b)$-regular set of $G$.
\end{proof}

\medskip
\noindent \textbf{Acknowledgements}
\medskip

We are grateful to the anonymous referees for their careful reading and helpful suggestions. The third author was supported by the Research Grant Support Scheme of The University of Melbourne.

\end{document}